\theoremstyle{plain}
  \newtheorem{theorem}{Theorem}
\theoremstyle{definition}
  \newtheorem{remark}[theorem]{Remark}
\newcommand*\patchAmsMathEnvironmentForLineno[1]{%
  \expandafter\let\csname old#1\expandafter\endcsname\csname #1\endcsname
  \expandafter\let\csname oldend#1\expandafter\endcsname\csname end#1\endcsname
  \renewenvironment{#1}%
     {\linenomath\csname old#1\endcsname}%
     {\csname oldend#1\endcsname\endlinenomath}}% 
\newcommand*\patchBothAmsMathEnvironmentsForLineno[1]{%
  \patchAmsMathEnvironmentForLineno{#1}%
  \patchAmsMathEnvironmentForLineno{#1*}}%
\renewcommand{\vec}[1]{\underline{#1}}
\NewDocumentCommand{\mat}{mo}{%
  \IfValueTF{#2}{%
    \underline{\underline{#1}}{#2}
  }{%
    \underline{\underline{#1}}\,
  }%
}
\DeclarePairedDelimiter{\diagfences}{(}{)}
\newcommand{\diag}{\operatorname{diag}\diagfences}
\newcommand{\proj}{\operatorname{proj}}
\newcommand{\fnum}{f^\mathrm{num}}
\newcommand{\vecfnum}{\vec{f}^\mathrm{num}}
\begin{document}

\parindent0pt

\begin{frontmatter}

\title{Extended Skew-Symmetric Form for Summation-by-Parts Operators
       and Varying Jacobians}

\date{March 28, 2017}
\journal{Journal of Computational Physics}

\author[myu]{Hendrik Ranocha\corref{cor1}}
\ead{h.ranocha@tu-bs.de}

\author[myu]{Philipp \"Offner}
\author[myu]{Thomas Sonar}

\cortext[cor1]{Corresponding author}

\address[myu]{TU Braunschweig, Institute Computational Mathematics,
Pockelsstra\ss e 14, 38106 Braunschweig, Germany }

\begin{abstract}
  A generalised analytical notion of summation-by-parts (SBP) methods is proposed,
extending the concept of SBP operators in the correction procedure via
reconstruction (CPR), a framework of high-order methods for conservation laws.
For the first time, SBP operators with dense norms and not including boundary
points are used to get an entropy stable split-form of Burgers' equation.
Moreover, overcoming limitations of the finite difference framework, stability
for curvilinear grids and dense norms is obtained for SBP CPR methods by using
a suitable way to compute the Jacobian.

\end{abstract}

\begin{keyword}
  hyperbolic conservation laws
  \sep high order methods
  \sep summation-by-parts
  \sep skew-symmetric form
  \sep correction procedure via reconstruction
  \sep entropy stability
\end{keyword}

\end{frontmatter}

\section{Introduction}
\label{sec:introduction}

Conservation laws can be used to model many physical phenomena. However, the
efficient numerical solution of these equations is difficult, since stability
becomes difficult to guarantee for high-order methods.

Here, the concept of \emph{summation-by-parts} (SBP) operators in the
\emph{correction procedure via reconstruction} (CPR) framework is extended by a
new analytical notion of these schemes. For the first time, both nodal and modal
SBP bases not including boundary points can be used to construct entropy stable
and conservative approximations of Burgers' equation using an extended
skew-symmetric form.
Moreover, these dense norm bases can be coupled with curvilinear grids by a
suitable way to compute the Jacobian in SBP CPR methods, contrary to classical
finite difference SBP schemes.

SBP operators originate in the \emph{finite difference} (FD) framework and yield
an approach to prove stability in a way similar to the continuous investigations
by mimicking \emph{integration-by-parts} on a discrete level, see inter alia the
review articles \cite{svard2014review, fernandez2014review} and references cited
therein. To get conservation and stability, (exterior and inter-element)
boundary conditions are imposed weakly by \emph{simultaneous approximation terms}
(SATs) and skew-symmetric formulations for nonlinear conservation laws are
used \cite{fisher2013discretely}.

Gassner \cite{gassner2013skew} applied the SBP framework to a
\emph{discontinuous Galerkin} (DG) spectral element method (DGSEM) using the
nodes of Lobatto-Legendre quadrature. Additionally,
Fernández et al. \cite{fernandez2014generalized} proposed an extended definition of SBP methods
in a numerical framework relying on nodal representations.

Ranocha et al. \cite{ranocha2016summation} investigated connections between SBP
methods and the general CPR framework, unifying flux reconstruction \cite{huynh2007flux}
and lifting collocation penalty \cite{wang2009unifying} schemes.
Several high-order methods such as DG, spectral volume and spectral difference
methods can be formulated in this framework, as described in the review article
\cite{huynh2014high} and references cited therein.

In this article, a brief review of SBP CPR methods is given in section \ref{sec:SBP},
followed by an introduction to some results about a skew-symmetric splitting 
for diagonal norm bases.

The main contribution of this article will be presented in section \ref{sec:generalisation}.
There, a more general setting for SBP CPR methods will be described.
Based on this, an extended skew-symmetric form of Burgers' equation is proposed.
Extending the correction terms in this form, conservation and nonlinear entropy
stability are proved for general SBP CPR semidiscretisations, including both nodal
bases without boundary nodes (e.g. Gauß-Legendre nodes) and modal bases
(e.g. Legendre polynomials). Numerical examples are presented thereafter.
Additionally, a brief comparison with the numerical setting of Fernández et al.
\cite{fernandez2014generalized} is given.

Moreover, limitations of the application of dense norms for curvilinear grids known
in FD SBP methods are overcome for SBP CPR methods in section \ref{sec:curvilinear}.

Finally, the results are summarised in section \ref{sec:summary}, followed by a
discussion and further topics of research.

\section{Correction procedure via reconstruction using diagonal-norm SBP operators}
\label{sec:SBP}

In this section, the basic concept and results about summation-by-parts operators
for correction procedure via reconstruction of \cite{ranocha2016summation} are
briefly reviewed.

CPR schemes are designed as semidiscretisations of hyperbolic conservation laws
\begin{equation}
\label{eq:conservation-law}
  \partial_t u + \partial_x f(u) = 0,
\end{equation}
equipped with appropriate initial and boundary conditions.
The domain is divided into non-overlapping intervals and each interval is mapped
to the reference element $[-1,1]$ for the computations. In each element, a nodal
polynomial basis  $\mathcal{B}$ of order $p$ is used to represent the numerical
solution.
The semidiscretisation of \eqref{eq:conservation-law} (i.e.
the computation of $\partial_x f(u)$) consists of the following steps, see also
the review \cite{huynh2014high} and references cited therein:
\begin{itemize}
  \item
    Interpolate the solution $u$ to the cell boundaries at $-1$ and $1$ (if these
    values are not already given as coefficients of the nodal basis).
  
  \item
    Compute common numerical fluxes $\fnum$ at each cell boundary.
  
  \item
    Compute the flux $f(u)$ pointwise in each node.
    
  \item
    Interpolate the flux $f(u)$ to the boundary and add polynomial correction
    functions $g_L$, $g_R$ of degree $p+1$, multiplied by the difference
    $f_{L/R} - \fnum_{L/R}$ of the flux and the numerical flux at the
    corresponding boundary.
    
  \item
    Finally, compute the resulting derivative of
    $f + (f_{L} - \fnum_{L}) g_L + (f_{R} - \fnum_{R}) g_R$,
    using exact differentiation for the polynomial basis.
\end{itemize}

For the representation of a diagonal-norm SBP operator, the basis $\mathcal{B}$ has to be
associated with a quadrature rule, given by nodes $z_0, \dots, z_p$ and
appropriate positive weights $\omega_0, \dots, \omega_p$. The values of $u$ at
the nodes are the coefficients of the local expansion, i.e.
$\vec{u} = (u(z_0), \dots, u(z_p))^T$. The quadrature weights determine a
positive definite mass matrix $\mat{M} = \diag{\omega_0, \dots, \omega_p}$ associated
with a discrete norm $\norm{u}_M^2 = \vec{u}^T \mat{M} \vec{u}$.
Moreover, the derivative is represented by the matrix $\mat{D}$ and the
restriction to the boundary $\set{-1,1}$ of the standard element $[-1,1]$ is
performed by the restriction matrix $\mat{R}$. For nodal bases including boundary
points (e.g. Lobatto-Legendre nodes), it is given by $\mat{R} = \begin{pmatrix}
1 & 0 & \dots & 0 \\ 0 & \dots & 0 & 1 \end{pmatrix}$. The bilinear form giving the
difference of boundary values is represented by the matrix $\mat{B} = \diag{-1,1}$.

The basis and its associated quadrature rule must satisfy the SBP property
\begin{equation}
\label{eq:SBP}
  \mat{M} \mat{D} + \mat{D}[^T] \mat{M} = \mat{R}[^T] \mat{B} \mat{R},
\end{equation}
in order to mimic integration by parts on a discrete level
\begin{equation}
\begin{aligned}
  \int_{-1}^{1} u \, (\partial_x v) \dif x + \int_{-1}^{1} (\partial_x u) \, v \dif x
  \approx
  \vec{u}^T \mat{M} \mat{D} \vec{v} + (\mat{D} \vec{u})^T \mat{M} \vec{v}
  =
  (\mat{R} \vec{u})^T \mat{B} (\mat{R} \vec{v})
  \approx
  \eval[2]{u \, v}_{-1}^{1}.
\end{aligned}
\end{equation}
If the quadrature is exact for polynomials of degree $\leq 2p-1$, this condition
is fulfilled, since all integrals are evaluated exactly, see also
\cite{kopriva2010quadrature, hicken2013summation}.

\cite{ranocha2016summation} introduced a formulation of CPR methods with special
attention paid to SBP operators.
After mapping each element to the standard element $[-1,1]$,
a CPR method can be formulated as
\begin{equation}
  \partial_t \vec{u} + \mat{D} \vec{f} + \mat{C} ( \vecfnum - \mat{R} \vec{f}) = 0.
\label{eq:CPR}
\end{equation}
Thus, for a given standard element, a CPR method is parametrised by
\begin{itemize}
  \item 
  A basis $\mathcal{B}$ for the local expansion, determining the derivative and
  restriction (interpolation) matrices $\mat{D}$ and $\mat{R}$.
  
  \item
  A correction matrix $\mat{C}$, adapted to the chosen basis.
\end{itemize}

As an example, consider Gauß-Lobatto-Legendre integration with its associated
basis of point values at Lobatto nodes in $[-1, 1]$. 
Using the special choice $\mat{C} = \mat{M}[^{-1}] \mat{R}[^T] \mat{B}$ and defining
$\mat{\widetilde{B}} := \mat{R}[^T] \mat{B} \mat{R}$, i.e.
$\mat{\widetilde{B}} = \diag{-1, 0, \dots, 0, 1}$,
the CPR method of equation \eqref{eq:CPR} reduces to
\begin{equation}
  \partial_t \vec{u} + \mat{D} \vec{f}
  + \mat{M}[^{-1}] \mat{\widetilde{B}}
    \left( \vec{\widetilde{f}}^\mathrm{num} - \vec{f} \right)
  = 0,
\label{eq:DGSEM}
\end{equation}
where $\vec{\widetilde{f}}^\mathrm{num} = (\fnum_L, 0, \dots, 0, \fnum_R)$ contains
the numerical flux at the left and right boundary and satisfies
$\vecfnum = \mat{R} \vec{\widetilde{f}}^\mathrm{num}$. Equation \eqref{eq:DGSEM}
is the strong form of the DGSEM formulation of Gassner \cite{gassner2013skew},
which he proved to be a diagonal norm SBP operator.

\subsection{Results about the skew-symmetric form of Burgers' equation and
diagonal-norm SBP operators}

Stability properties for linear and nonlinear problems can be very different. 
\cite{ranocha2016summation} considered Burgers' equation
\begin{equation}
  \partial_t u + \partial_x \frac{u^2}{2} = 0
\label{eq:inviscid-burgers}
\end{equation}
in one space dimension with periodic boundary conditions and appropriate initial
condition.

As described by \cite{gassner2013skew}, a split-operator form of the flux divergence
$\alpha \partial_x \frac{1}{2} u^2 + (1-\alpha) u \partial_x u$
can be used to get conservation and stability (across elements) if boundary nodes
are included in the basis and the numerical flux is entropy stable in the sense
of Tadmor \cite{tadmor1987numerical,tadmor2003entropy}, i.e.
$\frac{1}{6} ( u_-^3 - u_+^3 ) - (u_- - u_+) \fnum(u_-, u_+) \leq 0$.
Here, the numerical flux $\fnum(u_-, u_+)$ is computed given the values
of $u$ from the elements to the left $(u_-)$ and to the right ($u_+$) of a given
boundary. This strong form discontinuous Galerkin spectral element method of 
\cite{gassner2013skew} with the choice $\alpha = \frac{2}{3}$ can be written as
the following semidiscretisation of the skew-symmetric form of Burgers' equation
\begin{equation}
  \partial_t \vec{u}
  + \mat{D} \frac{1}{2} \vec{u^2}
  + \underbrace{
    \frac{1}{3} \left( \mat{u} \mat{D} \vec{u} - \frac{1}{2} \mat{D} \mat{u} \vec{u} \right)
    }_{=:\vec{c}_\mathrm{div}}
  + \mat{M}[^{-1}] \mat{R}[^T] \mat{B} \left(
    \vecfnum - \mat{R} \frac{1}{2} \vec{u^2}
  \right)
  = \vec{0},
\end{equation}
where the vector $\vecfnum = (\fnum_L, \fnum_R)$ contains the numerical fluxes at
the left and right boundaries of the element. The term $\vec{c}_\mathrm{div}$ is a a discrete
correction to the divergence term, that has to be used since the product rule is
not valid discretely.

For a general SBP basis without boundary nodes, the stability investigation
is more complicated. \cite{ranocha2016summation} introduced and analysed a new
correction term $\vec{c}_\mathrm{res}$ for the restriction to the boundary, resulting in
\begin{gather}
  \label{eq:inviscid-burgers-CPR-full-correction}
  \partial_t \vec{u}
  + \mat{D} \frac{1}{2} \vec{u^2}
  + \vec{c}_\mathrm{div}
  + \mat{M}[^{-1}] \mat{R}[^T] \mat{B} \left(
    \vecfnum - \mat{R} \frac{1}{2} \vec{u^2} - \vec{c}_\mathrm{res}
  \right)
  = \vec{0},
  \\
  \label{eq:inviscid-burgers-CPR-correction-terms}
  \vec{c}_\mathrm{div}
  =
  \frac{1}{3} \left(
    \mat{u} \mat{D} \vec{u} - \frac{1}{2} \mat{D} \mat{u} \vec{u}
  \right)
  ,\qquad
  \vec{c}_\mathrm{res}
  =
  \frac{1}{6} \left(
    (\mat{R} \vec{u})^2 - \mat{R} \mat{u} \vec{u}
  \right).
\end{gather}
It has been stressed that not only the product rule is not valid discretely, but
multiplication is not exact. This results in incorrect divergence and restriction
terms that have to be corrected in order to get the desired conservation and
stability estimates.
\begin{theorem}[Theorem 9 of \cite{ranocha2016summation}]
\label{thm:CPR-burgers}
  If the numerical flux $\fnum$ satisfies
  \begin{equation}
  \label{eq:fnum-ES}
    \frac{1}{6} ( u_-^3 - u_+^3 ) - (u_- - u_+) \fnum(u_-, u_+) \leq 0,
  \end{equation}
  then a diagonal-norm SBP CPR method 
  \eqref{eq:inviscid-burgers-CPR-full-correction} with correction terms for both
  divergence and restriction to the boundary
  \eqref{eq:inviscid-burgers-CPR-correction-terms} for the inviscid Burgers'
  equation \eqref{eq:inviscid-burgers} is both conservative and stable in the
  discrete norm $\norm{\cdot}_{M}$ induced by $\mat{M}$.
  Numerical fluxes fulfilling condition \eqref{eq:fnum-ES} are inter alia
  \begin{itemize}
    \item the energy conservative (ECON) flux
      \begin{equation}
      \label{eq:burgers-econ-flux}
        \fnum(u_-, u_+)
        =
        \frac{1}{4} ( u_+^2 + u_-^2 )
        - \frac{(u_+ - u_-)^2}{12}
      \end{equation}
    \item the local Lax-Friedrichs (LLF) flux
      \begin{equation}
      \label{eq:burgers-llf-flux}
        \fnum(u_-, u_+)
        =
        \frac{1}{4} ( u_+^2 + u_-^2 )
        - \frac{\max( |u_+|, |u_-| )}{2} (u_+ - u_-)
      \end{equation}
    \item and Osher's flux
      \begin{equation}
      \label{eq:burgers-osher-flux}
        \fnum(u_-, u_+)
        =
        \begin{dcases}
          \frac{u_-^2}{2}, & u_+, u_- > 0, \\
          \frac{u_+^2}{2}, & u_+, u_- < 0, \\
          \frac{u_+^2}{2} + \frac{u_-^2}{2}, & u_- \geq 0 \geq u_+, \\
          0, & u_- \leq 0 \leq u_+.
        \end{dcases}
      \end{equation}
  \end{itemize}
\end{theorem}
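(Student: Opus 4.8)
The plan is to prove both assertions by the energy method familiar from SBP analysis: contract the semidiscretisation \eqref{eq:inviscid-burgers-CPR-full-correction} from the left by $\vec{1}^T \mat{M}$ to obtain conservation, and by $\vec{u}^T \mat{M}$ to obtain the estimate in $\norm{\cdot}_M$; in each case reduce the element-local identity to pure boundary terms by means of the SBP property \eqref{eq:SBP}, and then sum over all elements so that the interface contributions telescope, the periodic boundary conditions removing the remaining global boundary. The recurring ingredients are: $\mat{D} \vec{1} = \vec{0}$ and $\mat{R} \vec{1} = (1,1)^T$ (exactness on constants); $\mat{M}$ and $\mat{u} = \diag{\vec{u}}$ are diagonal, hence commute, and $(\mat{D}\vec{u})^T \mat{M} \vec{u^2} = \vec{u}^T \mat{u}\, \mat{M} \mat{D} \vec{u}$; and the discrete product-rule identity
\[
  \vec{u}^T \mat{M} \mat{D} \vec{u^2}
  = (\mat{R}\vec{u})^T \mat{B} (\mat{R}\vec{u^2})
    - \vec{u}^T \mat{u}\, \mat{M} \mat{D} \vec{u},
\]
which follows from \eqref{eq:SBP} rewritten as $\mat{M}\mat{D} = \mat{R}[^T]\mat{B}\mat{R} - \mat{D}[^T]\mat{M}$ (note $\mat{R}\vec{u^2} = \mat{R}\mat{u}\vec{u}$, and both this identity and the extra correction $\vec{c}_\mathrm{res}$ are needed only because multiplication is not exact discretely).

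For conservation I would left-multiply \eqref{eq:inviscid-burgers-CPR-full-correction} by $\vec{1}^T \mat{M}$. From \eqref{eq:SBP}, $\mat{D}\vec{1}=\vec{0}$, $\mat{R}\vec{1}=(1,1)^T$ and $\mat{B}=\diag{-1,1}$ one gets $\vec{1}^T \mat{M} \mat{D} = (-1,1)\mat{R}$, so $\mat{D}\tfrac12\vec{u^2}$ and the $\mat{D}\mat{u}\vec{u}$ piece of $\vec{c}_\mathrm{div}$ collapse to boundary values of $\mat{R}\vec{u^2}$, while the $\mat{u}\mat{D}\vec{u}$ piece of $\vec{c}_\mathrm{div}$ becomes $\tfrac13\vec{u}^T\mat{M}\mat{D}\vec{u} = \tfrac16(\mat{R}\vec{u})^T\mat{B}(\mat{R}\vec{u})$ by \eqref{eq:SBP}, and the surface term produces $\vec{1}^T\mat{R}[^T]\mat{B}\bigl(\vecfnum - \mat{R}\tfrac12\vec{u^2} - \vec{c}_\mathrm{res}\bigr) = (-1,1)\bigl(\vecfnum - \mat{R}\tfrac12\vec{u^2} - \vec{c}_\mathrm{res}\bigr)$. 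Collecting the coefficients of $(\mat{R}\vec{u^2})_L$, $(\mat{R}\vec{u^2})_R$, $(\mat{R}\vec{u})_L^2$ and $(\mat{R}\vec{u})_R^2$, each total vanishes precisely because of the weights $\tfrac13$ and $\tfrac16$ chosen in $\vec{c}_\mathrm{div}$ and $\vec{c}_\mathrm{res}$, which leaves $\vec{1}^T\mat{M}\partial_t\vec{u} = \fnum_L - \fnum_R$; summing over elements, this telescopes to $\partial_t\sum_e \vec{1}^T\mat{M}\vec{u}^{(e)} = 0$.

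For stability I would contract with $\vec{u}^T\mat{M}$, so that $\tfrac12 \partial_t \norm{\vec{u}}_M^2 = \vec{u}^T\mat{M}\partial_t\vec{u}$. Applying the discrete product rule to $\vec{u}^T\mat{M}\mat{D}\tfrac12\vec{u^2}$ and to the $\mat{D}\mat{u}\vec{u}$ part of $\vec{u}^T\mat{M}\vec{c}_\mathrm{div}$, the volume quantity $\vec{u}^T\mat{u}\,\mat{M}\mat{D}\vec{u}$ occurs with coefficients $-\tfrac12$ and $+\tfrac13+\tfrac16$, summing to zero; the surface quantity $(\mat{R}\vec{u})^T\mat{B}(\mat{R}\vec{u^2})$ occurs with $+\tfrac12$ and $-\tfrac16$ from the volume terms, $-\tfrac12$ from $\mat{R}\tfrac12\vec{u^2}$ in the SAT and $+\tfrac16$ from the $\mat{R}\mat{u}\vec{u}$ part of $\vec{c}_\mathrm{res}$, again summing to zero; and the cubic term $\tfrac16(\mat{R}\vec{u})^T\mat{B}(\mat{R}\vec{u})^2 = \tfrac16\bigl((\mat{R}\vec{u})_R^3 - (\mat{R}\vec{u})_L^3\bigr)$ coming from $\vec{c}_\mathrm{res}$ survives. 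Collecting everything, $\tfrac12\partial_t\norm{\vec{u}}_M^2 = -u_R\fnum_R + u_L\fnum_L + \tfrac16(u_R^3 - u_L^3)$ with $u_{L/R} = (\mat{R}\vec{u})_{L/R}$. Summing over elements, the interface between two neighbours with traces $u_-$, $u_+$ and common flux $\fnum(u_-,u_+)$ contributes $\fnum(u_-,u_+)(u_+ - u_-) + \tfrac16(u_-^3 - u_+^3)$, which is exactly $\le 0$ by hypothesis \eqref{eq:fnum-ES}; with periodicity this yields $\partial_t\sum_e\norm{\vec{u}^{(e)}}_M^2 \le 0$.

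The main obstacle is bookkeeping rather than conceptual depth: one must consistently distinguish $\mat{R}\vec{u^2}$ (restriction of the pointwise squares) from $(\mat{R}\vec{u})^2$ (square of the restricted value), and track every appearance of the two ``problem'' quantities $\vec{u}^T\mat{u}\,\mat{M}\mat{D}\vec{u}$ and $(\mat{R}\vec{u})^T\mat{B}(\mat{R}\vec{u^2})$, so as to verify that the weights $\tfrac13$ and $\tfrac16$ in $\vec{c}_\mathrm{div}$ and $\vec{c}_\mathrm{res}$ cancel them exactly; once this is done, the residual is precisely the Tadmor-type cubic-plus-flux combination at each interface that \eqref{eq:fnum-ES} controls. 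Finally, the three listed fluxes are verified by substitution into \eqref{eq:fnum-ES}: for the ECON flux the left-hand side is identically zero; for the LLF flux it reduces, after the ECON part cancels, to $-(u_- - u_+)^2\bigl(\tfrac1{12}(u_- - u_+) + \tfrac12\max(|u_+|,|u_-|)\bigr) \le 0$, using $|u_- - u_+| \le 2\max(|u_+|,|u_-|)$; and for Osher's flux a short case distinction on the signs of $u_\pm$ suffices.
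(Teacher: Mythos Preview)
Your proposal is correct and follows essentially the same route as the paper. Note that the paper does not give a separate proof of Theorem~\ref{thm:CPR-burgers} (it is quoted from \cite{ranocha2016summation}); the relevant argument is the proof of the generalisation Theorem~\ref{thm:CPR-burgers-general}, which in the diagonal-norm case reduces precisely to your computation. The only presentational difference is that the paper contracts once with a general test vector $\vec{v}^T\mat{M}$, derives the identity \eqref{eq:CPR-burgers-v-M}, and then specialises to $\vec{v}=\vec{u}$ and $\vec{v}=\vec{1}$, whereas you carry out the two contractions separately; your explicit verification of \eqref{eq:fnum-ES} for the three listed fluxes is an addition not spelled out in the paper.
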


\section{Abstract view and generalisation}
\label{sec:generalisation}

The basic setting described in the previous section
uses diagonal norm SBP operators and nodal bases, associated with quadrature rules
with positive weights. These operators have been used in the context
of CPR methods to obtain conservative and stable semidiscretisations for
linear advection and Burgers' equation.
This chapter provides a more abstract view on the results and generalised
schemes with a new form of the correction terms, allowing both modal and nodal
bases with arbitrary (dense) norm.

\subsection{Analytical setting in one dimension}
\label{sec:analytical-setting-in-one-dimension}

Continuing the investigations, an analytical setting
in the one-dimensional standard element $\Omega = [-1, 1]$ is presented at first.
The semidiscretisation in space consists of the representation of a numerical
solution in a (real) finite dimensional Hilbert space $X_V$, 
the space of functions on the (one-dimensional) \emph{volume} $\Omega$.
Hitherto, $X_V$
has been the space of polynomials of degree $\leq p$, i.e. $\dim X_V = p+1$.
$X_V$ is equipped with a suitable basis $\mathcal{B}_V$, e.g. a Lagrange
(interpolation) basis for Gauß-Legendre or Lobatto-Legendre quadrature nodes.
With regard to $\mathcal{B}_V$, the \emph{scalar product and associated norm}
on $X_V$ are given by a symmetric and positive-definite matrix $\mat{M}$,
approximating the $L^2$ norm on $X_V$, i.e.
\begin{equation}
  \vec{u}^T \mat{M} \vec{v}
  = \left\langle \vec{u}, \vec{v} \right\rangle_M
  \approx
  \int_{\Omega} u v
  = \left\langle u, v \right\rangle_{L^2}.
\end{equation}
In one dimension, a \emph{divergence} (derivative) operator mapping $X_V$ to
$X_V$ is represented by a matrix $\mat{D}$.

Besides $X_V$, the vector space $X_B$ of functions on the ($0$-dimensional)
\emph{boundary} $\partial \Omega$ of the standard element $\Omega$ is used.
The associated basis is denoted by $\mathcal{B}_B$.
In the simple one-dimensional case, $X_B$ is a two-dimensional vector space
and $\mathcal{B}_B$ is chosen to represent point values at $-1$ and $1$.
On the boundary, a bilinear form is represented by a matrix $\mat{B}$,
approximating the \emph{boundary (surface) integral} in the outward normal
direction, i.e. evaluation at the boundary.
More precisely, $\mat{B}$ maps $X_B \times X_B$ to $\mathbb{R}$ and
\begin{equation}
  \vec{u}_B^T \mat{B} \vec{f}_B
  = B(u_B, f_B)
  \approx \eval[2]{u_B \, f_B}_{-1}^{1}.
\end{equation}
In the simple one-dimensional setting, $u_B$ and $f_B$ are both scalar functions
and $\int_{\partial \Omega} u_B \, f_B \cdot n = u(1) f(1) - u(-1) f(-1)$, i.e.
$\mat{B} = \diag{-1,1}$ if $\mathcal{B}_B$ is ordered such that the value at
$-1$ is the first coefficient.
With regard to the chosen bases $\mathcal{B}_V$ and $\mathcal{B}_B$, 
a \emph{restriction} operator is represented by a matrix
$\mat{R}$, mapping a function $u$ on the volume to its values at the
boundary.
Again, the SBP property
$\mat{M} \mat{D} + \mat{D}[^T] \mat{M} = \mat{R}[^T] \mat{B} \mat{R}$ \eqref{eq:SBP}
mimics integration by parts.

A CPR method is further parametrised by a \emph{correction} or \emph{penalty}
operator, represented by a matrix $\mat{C}$ adapted to the chosen bases. The
canonical choice is $\mat{C} = \mat{M}[^{-1}] \mat{R}[^T] \mat{B}$ as described in
\cite{ranocha2016summation}, especially for nonlinear equations. For linear advection,
other choices of $\mat{C}$ are possible, recovering the full range of
linearly stable schemes presented in \cite{vincent2015extended}, see
\cite[section 3]{ranocha2016summation}.

Since nonlinear fluxes $f(u)$ appear and are of interest, nonlinear operations
on $X_V$ have to be described. In general, if $X_V$ is a finite-dimensional 
vector space of polynomials containing polynomials of degree $\leq p$
($p \geq 1$ and $p$ is minimal), then the product of $u, v \in X_V$ is a
polynomial of degree $\leq 2p$, i.e. not in $X_V$ in general. Therefore, discrete
multiplication is not exact. Thus, multiplying $v \in X_V$ with $u \in X_V$
yields $\mat{u}[^+] \vec{v} \in X_V^+$, where $X_V^+ \supset X_V$ is a vector
space of higher dimension. After this exact multiplication, a 
\emph{projection} on $X_V$ is performed, resulting in $\mat{u} \vec{v} \in X_V$.

For a nodal basis $\mathcal{B}_V$, the natural projection is given by pointwise
evaluation at the nodes. However, for a modal
basis of Legendre polynomials, the natural projection is an $L^2$ orthogonal
projection on $X_V$. Disappointingly, this concept does not easily extend to division,
since $L^2$ projection of rational functions is not a simple task.

\subsection{Revisiting Burgers' equation}

Investigating again a skew-symmetric SBP CPR method without the assumption of
a nodal and/or orthogonal basis, some further complications arise. In contrast
to the manipulations used to prove Theorem \ref{thm:CPR-burgers}
(see also \cite{gassner2013skew}),
$\mat{u}$ and $\mat{M}$ might not commute, either because the nodal basis is
not orthogonal or because a modal basis is chosen.
Therefore, the correction terms
\eqref{eq:inviscid-burgers-CPR-correction-terms} for the divergence and restriction
do not suffice to prove conservation and stability. The reason is again
inexactness of discrete multiplication. A multiplication operator $\mat{u}$
should be self-adjoint, at least in a finite-dimensional space (and in general,
if a correct domain is chosen). Thus, instead of $\mat{u}$ in the first term
of $\vec{c}_\mathrm{div}$, the adjoint $\mat{u}[^*]$ of $\mat{u}$ with respect to the
scalar product induced by $\mat{M}$ is proposed. The symmetry condition 
\begin{equation}
  \left\langle \vec{v}, \mat{u} \vec{w} \right\rangle_M
  =
  \left\langle \mat{u}[^*] \vec{v}, \vec{w} \right\rangle_M
\end{equation}
can be written as
\begin{equation}
  \vec{v}^T \mat{M} \mat{u} \vec{w}
  =
  \vec{v}^T (\mat{u}[^*])^T \mat{M} \vec{w}.
\end{equation}
Thus, since $\vec{v}$ and $\vec{w}$ are arbitrary,
$ \mat{M} \mat{u} = (\mat{u}[^*])^T \mat{M}$, i.e.
$ \mat{u}[^*] = \mat{M}[^{-1}] \mat{u}[^T] \mat{M}$, and the generalised correction
terms are
\begin{equation}
  \vec{c}_\mathrm{div}
  = \frac{1}{3} \left( \mat{M}[^{-1}] \mat{u}[^T] \mat{M} \mat{D} \vec{u}
                      - \frac{1}{2} \mat{D} \mat{u} \vec{u}
                \right)
  ,\qquad
  \vec{c}_\mathrm{res}
  = \frac{1}{6} \left( (\mat{R} \vec{u})^2 - \mat{R} \mat{u} \vec{u} \right).
\label{eq:inviscid-burgers-CPR-correction-terms-general}
\end{equation}
Using these correction terms, Theorem \ref{thm:CPR-burgers} is generalised by
\begin{theorem}
\label{thm:CPR-burgers-general}
  If the numerical flux $\fnum$ satisfies
  $\frac{1}{6} ( u_-^3 - u_+^3 ) - (u_- - u_+) \fnum(u_-, u_+) \leq 0$
  \eqref{eq:fnum-ES},
  then a general SBP CPR method with $\mat{C} = \mat{M}[^{-1}] \mat{R}[^T] \mat{B}$
  and correction terms \eqref{eq:inviscid-burgers-CPR-correction-terms-general}
  for both divergence and restriction to the boundary
  \begin{equation}
    \partial_t \vec{u}
    + \mat{D} \frac{1}{2} \vec{u^2}
    + \vec{c}_\mathrm{div}
    + \mat{C} \left( \vecfnum
                    - \mat{R} \frac{1}{2} \vec{u^2}
                    - \vec{c}_\mathrm{res}
              \right)
    = \vec{0},
  \label{eq:inviscid-burgers-CPR-full-correction-general}
  \end{equation}
  for the inviscid Burgers' equation \eqref{eq:inviscid-burgers} 
  is both conservative and stable across elements
  in the discrete norm $\norm{\cdot}_{M}$ induced by $\mat{M}$.
  Numerical fluxes fulfilling condition \eqref{eq:fnum-ES} are inter alia
  \begin{itemize}
    \item the energy conservative (ECON) flux \eqref{eq:burgers-econ-flux},
    \item the local Lax-Friedrichs (LLF) flux \eqref{eq:burgers-llf-flux},
    \item and Osher's flux \eqref{eq:burgers-osher-flux}.
  \end{itemize}
\end{theorem}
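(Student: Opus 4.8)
The plan is to test the semidiscretisation \eqref{eq:inviscid-burgers-CPR-full-correction-general} in the scalar product $\langle \cdot, \cdot \rangle_M$ with the constant vector $\vec{1} \in X_V$ (for conservation) and with $\vec{u}$ itself (for stability), following the structure of the proof of Theorem~\ref{thm:CPR-burgers} but now carefully tracking that $\mat{M}$ and $\mat{u}$ need not commute and that discrete multiplication is inexact. The ingredients are: $\mat{D} \vec{1} = \vec{0}$, since the derivative annihilates constants; $\mat{R} \vec{1} = \vec{1}_{\partial\Omega}$ and $\mat{u} \vec{1} = \vec{u}$, since multiplication by $1$ is exact; $\mat{u} \vec{u} = \vec{u^2}$ and hence $\mat{R} \mat{u} \vec{u} = \mat{R} \vec{u^2}$; the SBP property \eqref{eq:SBP} in the form $\vec{v}^T \mat{M} \mat{D} \vec{w} + (\mat{D} \vec{v})^T \mat{M} \vec{w} = (\mat{R} \vec{v})^T \mat{B} (\mat{R} \vec{w})$; and, for $\mat{C} = \mat{M}[^{-1}] \mat{R}[^T] \mat{B}$, the relation $\vec{v}^T \mat{M} \mat{C} = (\mat{R} \vec{v})^T \mat{B}$. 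Since $\mat{B} = \diag{-1,1}$ is diagonal, one also has $\vec{1}_{\partial\Omega}^T \mat{B} (\mat{R} \vec{v})^2 = (\mat{R} \vec{v})^T \mat{B} (\mat{R} \vec{v})$ and $(\mat{R} \vec{u})^T \mat{B} (\mat{R} \vec{u})^2 = u(1)^3 - u(-1)^3$.

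For conservation I would test with $\vec{1}$. Using $\mat{D} \vec{1} = \vec{0}$ together with \eqref{eq:SBP} turns $\vec{1}^T \mat{M} \mat{D}$ applied to anything into the pure boundary operator $\vec{1}_{\partial\Omega}^T \mat{B} \mat{R}$, and $\mat{u} \vec{1} = \vec{u}$ turns the first summand of $\vec{c}_\mathrm{div}$ into $-\frac{1}{3} \vec{u}^T \mat{M} \mat{D} \vec{u} = -\frac{1}{6} (\mat{R} \vec{u})^T \mat{B} (\mat{R} \vec{u})$. Collecting everything, all the remaining $\mat{R} \vec{u^2}$ boundary terms cancel (their coefficients, contributed by the divergence, by $\vec{c}_\mathrm{div}$, by $\vec{c}_\mathrm{res}$ and by the interpolated flux, sum to zero), and the $-\frac{1}{6} (\mat{R} \vec{u})^T \mat{B} (\mat{R} \vec{u})$ term cancels the $(\mat{R} \vec{u})^2$ part of $\vec{c}_\mathrm{res}$, so that what survives is $\vec{1}^T \mat{M} \partial_t \vec{u} = -\vec{1}_{\partial\Omega}^T \mat{B} \vecfnum = \fnum_L - \fnum_R$, i.e.\ only the numerical fluxes at the two element boundaries. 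Summing over all elements, the single-valued interface flux appears with opposite sign from the two neighbours (because $\mat{B} = \diag{-1,1}$), so the interior contributions telescope and periodicity removes the rest, which proves conservation.

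For stability I would test with $\vec{u}$, so that $\vec{u}^T \mat{M} \partial_t \vec{u}$ is the rate of change of $\frac{1}{2} \norm{\vec{u}}_M^2$. The place where the generalised correction term matters is the volume part $\vec{u}^T \mat{M} \bigl( \mat{D} \frac{1}{2} \vec{u^2} + \vec{c}_\mathrm{div} \bigr)$: since $\mat{u}[^*] = \mat{M}[^{-1}] \mat{u}[^T] \mat{M}$, one has $\vec{u}^T \mat{M} \mat{u}[^*] \mat{D} \vec{u} = (\mat{u} \vec{u})^T \mat{M} \mat{D} \vec{u} = (\vec{u^2})^T \mat{M} \mat{D} \vec{u}$ --- the discrete analogue of the $L^2$ self-adjointness of multiplication by $u$, an identity that would fail for the non-self-adjoint $\mat{M} \mat{u}$ and that is the only genuinely new ingredient compared to Theorem~\ref{thm:CPR-burgers}. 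Combining the $\frac{1}{2}$, $\frac{1}{3}$ and $\frac{1}{6}$ coefficients leaves $\frac{1}{3} \bigl( \vec{u}^T \mat{M} \mat{D} \vec{u^2} + (\vec{u^2})^T \mat{M} \mat{D} \vec{u} \bigr)$, and one application of \eqref{eq:SBP} collapses this to the pure boundary term $\frac{1}{3} (\mat{R} \vec{u})^T \mat{B} (\mat{R} \vec{u^2})$, the discrete analogue of $\frac{1}{3} \int_{-1}^{1} \partial_x u^3 \dif x$.

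It then remains to expand the boundary part $(\mat{R} \vec{u})^T \mat{B} \bigl( \vecfnum - \mat{R} \frac{1}{2} \vec{u^2} - \vec{c}_\mathrm{res} \bigr)$. The contributions of $-\mat{R} \frac{1}{2} \vec{u^2}$ and of the $-\mat{R} \mat{u} \vec{u}$ part of $\vec{c}_\mathrm{res}$ add up to $+\frac{1}{3} (\mat{R} \vec{u})^T \mat{B} \mat{R} \vec{u^2}$, which cancels the volume boundary term $\frac{1}{3} (\mat{R} \vec{u})^T \mat{B} (\mat{R} \vec{u^2})$ found above, while the $(\mat{R} \vec{u})^2$ part of $\vec{c}_\mathrm{res}$ produces $\frac{1}{6} \bigl( u(1)^3 - u(-1)^3 \bigr)$; altogether $\vec{u}^T \mat{M} \partial_t \vec{u} = -(\mat{R} \vec{u})^T \mat{B} \vecfnum + \frac{1}{6} \bigl( u(1)^3 - u(-1)^3 \bigr)$. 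Summing over the elements and using that at each interface the two neighbours contribute with opposite sign (from $\mat{B} = \diag{-1,1}$) and with the same numerical flux, the per-interface total becomes exactly $\frac{1}{6} ( u_-^3 - u_+^3 ) - (u_- - u_+) \fnum(u_-, u_+)$, which is $\leq 0$ by hypothesis \eqref{eq:fnum-ES}; periodicity removes the exterior terms, so $\frac{1}{2} \sum \norm{\vec{u}}_M^2$ (over all elements) is non-increasing in time, which is the claimed stability across elements. That the ECON, LLF and Osher fluxes satisfy \eqref{eq:fnum-ES} is the same elementary estimate as for Theorem~\ref{thm:CPR-burgers} and involves neither the basis nor the norm. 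I expect the main obstacle to be purely organisational bookkeeping: after applying \eqref{eq:SBP} one must verify that the several $\vec{u^2}$-dependent boundary terms --- coming from $\mat{D} \frac{1}{2} \vec{u^2}$, from $\vec{c}_\mathrm{div}$, from the interpolated flux $\mat{R} \frac{1}{2} \vec{u^2}$ and from $\vec{c}_\mathrm{res}$ --- cancel exactly, so that only the cubic flux-difference term controlled by \eqref{eq:fnum-ES} survives at each interface; the replacement of $\mat{u}$ by its $\mat{M}$-adjoint $\mat{u}[^*]$ in $\vec{c}_\mathrm{div}$ is precisely what makes these cancellations go through when $\mat{M}$ and $\mat{u}$ do not commute.
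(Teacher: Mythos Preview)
Your proposal is correct and follows essentially the same approach as the paper: test the semidiscretisation against $\vec{1}$ for conservation and against $\vec{u}$ for stability, use the SBP identity \eqref{eq:SBP} to convert volume terms to boundary terms, and exploit that the first summand in $\vec{c}_\mathrm{div}$ is built from the $\mat{M}$-adjoint $\mat{u}[^*]$ so that $\vec{u}^T \mat{M} \mat{u}[^*] \mat{D}\vec{u} = (\mat{u}\vec{u})^T \mat{M} \mat{D}\vec{u}$. The only organisational difference is that the paper first derives a single identity valid for an arbitrary test vector $\vec{v}$ (applying \eqref{eq:SBP} twice and arriving at the compact form $\vec{v}^T \mat{M}\partial_t\vec{u} = \frac{1}{3}\vec{v}^T\mat{D}[^T]\mat{M}\mat{u}\vec{u} - \frac{1}{3}\vec{v}^T\mat{u}[^T]\mat{M}\mat{D}\vec{u} - \vec{v}^T\mat{R}[^T]\mat{B}\vecfnum + \frac{1}{6}\vec{v}^T\mat{R}[^T]\mat{B}(\mat{R}\vec{u})^2$) and then specialises to $\vec{v}=\vec{u}$ and $\vec{v}=\vec{1}$, whereas you treat the two cases separately from the start; the algebra and the cancellations are the same.
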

\begin{proof}
  Multiplying $\partial_t \vec{u}$ with $\vec{v}^T \mat{M}$, inserting
  $\mat{C} = \mat{M}[^{-1}] \mat{R}[^T] \mat{B}$ and applying the SBP property
  \eqref{eq:SBP} yields
  \begin{equation}
  \begin{aligned}
    \vec{v}^T \mat{M} \partial_t \vec{u}
    &=
    - \frac{1}{2} \vec{v}^T \mat{M} \mat{D} \mat{u} \vec{u}
    - \vec{v}^T \mat{M} \vec{c}_\mathrm{div}
    - \vec{v}^T \mat{R}[^T] \mat{B} \left(\vecfnum
                                        - \frac{1}{2} \mat{R} \mat{u} \vec{u}
                                        - \vec{c}_\mathrm{res}
                                        \right)
    \\&=
    + \frac{1}{2} \vec{v}^T \mat{D}[^T] \mat{M} \mat{u} \vec{u}
    - \frac{1}{2} \vec{v}^T \mat{R}[^T] \mat{B} \mat{R} \mat{u} \vec{u}
    \\&\quad- \vec{v}^T \mat{M} \vec{c}_\mathrm{div}
    - \vec{v}^T \mat{R}[^T] \mat{B} \left(\vecfnum
                                        - \frac{1}{2} \mat{R} \mat{u} \vec{u}
                                        - \vec{c}_\mathrm{res}
                                        \right).
  \end{aligned}
  \end{equation}
  Gathering terms and inserting $\vec{c}_\mathrm{div}$, $\vec{c}_\mathrm{res}$ from equation
  \eqref{eq:inviscid-burgers-CPR-correction-terms-general} results in
  \begin{equation}
  \begin{aligned}
    \vec{v}^T \mat{M} \partial_t \vec{u}
    &=
      \frac{1}{2} \vec{v}^T \mat{D}[^T] \mat{M} \mat{u} \vec{u}
    - \vec{v}^T \mat{M} \vec{c}_\mathrm{div}
    - \vec{v}^T \mat{R}[^T] \mat{B} \vecfnum
    + \vec{v}^T \mat{R}[^T] \mat{B} \vec{c}_\mathrm{res}
    \\&=
      \frac{1}{2} \vec{v}^T \mat{D}[^T] \mat{M} \mat{u} \vec{u}
    - \frac{1}{3} \vec{v}^T \mat{u}[^T] \mat{M} \mat{D} \vec{u}
    + \frac{1}{6} \vec{v}^T \mat{M} \mat{D} \mat{u} \vec{u}
    \\&\quad
    - \vec{v}^T \mat{R}[^T] \mat{B} \vecfnum
    + \frac{1}{6} \vec{v}^T \mat{R}[^T] \mat{B} ( \mat{R} \vec{u} )^2
    - \frac{1}{6} \vec{v}^T \mat{R}[^T] \mat{B} \mat{R} \mat{u} \vec{u}.
  \end{aligned}
  \end{equation}
  Applying the SBP property \eqref{eq:SBP} for the third term yields
  \begin{equation}
  \begin{aligned}
    \vec{v}^T \mat{M} \partial_t \vec{u}
    &=
      \frac{1}{2} \vec{v}^T \mat{D}[^T] \mat{M} \mat{u} \vec{u}
    - \frac{1}{3} \vec{v}^T \mat{u}[^T] \mat{M} \mat{D} \vec{u}
    + \frac{1}{6} \vec{v}^T \mat{R}[^T] \mat{B} \mat{R} \mat{u} \vec{u}
    - \frac{1}{6} \vec{v}^T \mat{D}[^T] \mat{M} \mat{u} \vec{u}
    \\&\quad
    - \vec{v}^T \mat{R}[^T] \mat{B} \vecfnum
    + \frac{1}{6} \vec{v}^T \mat{R}[^T] \mat{B} ( \mat{R} \vec{u} )^2
    - \frac{1}{6} \vec{v}^T \mat{R}[^T] \mat{B} \mat{R} \mat{u} \vec{u}
    \\&=
      \frac{1}{3} \vec{v}^T \mat{D}[^T] \mat{M} \mat{u} \vec{u}
    - \frac{1}{3} \vec{v}^T \mat{u}[^T] \mat{M} \mat{D} \vec{u}
    - \vec{v}^T \mat{R}[^T] \mat{B} \vecfnum
    + \frac{1}{6} \vec{v}^T \mat{R}[^T] \mat{B} ( \mat{R} \vec{u} )^2.
  \end{aligned}
  \label{eq:CPR-burgers-v-M}
  \end{equation}
  
  In order to obtain \emph{stability},
  $\frac{1}{2} \od{}{t} \norm{u}_M^2 = \vec{u}^T \mat{M} \partial_t \vec{u}$
  has to be considered. Thus, setting $\vec{v} = \vec{u}$ in
  \eqref{eq:CPR-burgers-v-M} and using the symmetry of the $\mat{M}$ results in
  \begin{equation}
    \frac{1}{2} \od{}{t} \norm{u}_M^2
    =
    - \vec{u}^T \mat{R}[^T] \mat{B} \vecfnum
    + \frac{1}{6} \vec{u}^T \mat{R}[^T] \mat{B} ( \mat{R} \vec{u} )^2,
  \end{equation}
  Denoting the values from the cells left and right to a given boundary node
  with indices $-$ and $+$, respectively, and summing over all elements, the
  contribution of one boundary node to $\frac{1}{2} \od{}{t} \norm{u}_M^2$ is
  \begin{equation}
    \frac{1}{6} (u_-^3 - u_+^3) - (u_- - u_+) \fnum(u_-, u_+).
  \end{equation}
  If this is non-positive (as assumed), stability in the discrete norm described
  by $\mat{M}$ is guaranteed.
  
  Investigation \emph{conservation} by setting $\vec{v} = \vec{1}$ in
  \eqref{eq:CPR-burgers-v-M}, using
  $\mat{D} \vec{1} = 0$ (i.e. exact differentiation for constant functions)
  and $\mat{u} \vec{1} = \vec{u}$ (i.e. exact multiplication with constant
  functions) yields
  \begin{equation}
    \od{}{t} \vec{1}^T \mat{M} \vec{u}
    =
    - \frac{1}{3} \vec{u}^T \mat{M} \mat{D} \vec{u}
    - \vec{1}^T \mat{R}[^T] \mat{B} \vecfnum
    + \frac{1}{6} \vec{1}^T \mat{R}[^T] \mat{B} ( \mat{R} \vec{u} )^2.
  \end{equation}
  Rewriting the first term (by the SBP property \eqref{eq:SBP}) as 
  \begin{equation}
    - \frac{1}{3} \vec{u}^T \mat{M} \mat{D} \vec{u}
    =
    - \frac{1}{6} \vec{u}^T \mat{M} \mat{D} \vec{u}
    + \frac{1}{6} \vec{u}^T \mat{D}[^T] \mat{M} \vec{u}
    - \frac{1}{6} \vec{u}^T \mat{R}[^T] \mat{B} \mat{R} \vec{u}
    =
    - \frac{1}{6} \vec{u}^T \mat{R}[^T] \mat{B} \mat{R} \vec{u}
  \end{equation}
  results in
  \begin{equation}
    \od{}{t} \vec{1}^T \mat{M} \vec{u}
    =
    - \frac{1}{6} \vec{u}^T \mat{R}[^T] \mat{B} \mat{R} \vec{u}
    - \vec{1}^T \mat{R}[^T] \mat{B} \vecfnum
    + \frac{1}{6} \vec{1}^T \mat{R}[^T] \mat{B} ( \mat{R} \vec{u} )^2.
  \end{equation}
  Denoting the values of $u$ at the left and right boundary as $u_L$ and $u_R$,
  respectively,
  \begin{equation}
    \vec{u}^T \mat{R}[^T] \mat{B} \mat{R} \vec{u}
    =
    u_R \cdot u_R - u_L \cdot u_L
    = 
    1 \cdot u_R^2 - 1 \cdot u_L^2
    =
    \vec{1}^T \mat{R}[^T] \mat{B} (\mat{R} \vec{u})^2.
  \end{equation}
  and therefore
  \begin{equation}
    \od{}{t} \vec{1}^T \mat{M} \vec{u}
    =
    - \vec{1}^T \mat{R}[^T] \mat{B} \vecfnum.
  \end{equation}
  Thus, summing over all elements, the contribution of both cells sharing a
  common boundary node cancel each other, since the numerical flux $\fnum$ is
  the same for both elements.
\end{proof}

\begin{remark}
  In Theorem \ref{thm:CPR-burgers-general}, conservation across elements has
  been considered. On a sub-element level, conservation for diagonal-norm SBP
  operators including boundary nodes has been proven in \cite{fisher2013discretely}
  in the context of the Lax-Wendroff theorem. An extension to dense norm and
  modal bases is still an open question.
\end{remark}

\begin{remark}
\label{rem:bases}
  Regarding conservation and stability across elements, Theorem~\ref{thm:CPR-burgers-general}
  is very general. However, there are several special cases that deserve to be
  mentioned explicitly for comparison and a better understanding.
  \begin{enumerate}
    \item
    \emph{Nodal bases including boundary nodes with diagonal mass matrix.}
    \\
    This has been considered inter alia in \cite{gassner2013skew}.
    Since boundary nodes are included, the correction term for the restriction
    vanishes, $\vec{c}_\mathrm{res} = \vec{0}$.
    Moreover, since the mass matrix $\mat{M}$ and the multiplication operators
    $\mat{u}$ are diagonal, the $\mat{M}$-adjoint is simply
    $\mat{u}[^*] = \mat{M}[^{-1}] \mat{u}[^T] \mat{M} = \mat{u}^T = \mat{u}$.
    
    \item
    \emph{Nodal bases (possibly not including boundary nodes) with diagonal mass matrix.}
    \\
    This has been considered in \cite{ranocha2016summation}. Here, the correction
    term for the restriction is in general not zero and has to be used to get a
    conservative and stable scheme. However, the mass matrix $\mat{M}$ and the
    multiplication operators $\mat{u}$ are diagonal, resulting in $\mat{u}[^*] = \mat{u}$.
    
    \item
    \emph{Nodal bases (possibly not including boundary nodes) with general mass matrix.}
    \\
    Here, both the usage of both correction terms with the correct $\mat{M}$-adjoint
    $\mat{u}[^*] = \mat{M}[^{-1}] \mat{u}[^T] \mat{M}$ is necessary in general
    to get the desired properties of the scheme.
    
    \item
    \emph{Modal Legendre bases.}
    \\
    In this case, no boundary nodes can be ``included''. Thus, the correction term
    for the restriction to the boundary does not vanish in general. 
    Here, multiplication operators performing exact multiplication followed by
    an orthogonal projection are considered. For this orthogonal basis, a
    multiplication operator $\mat{u}$ is in general not diagonal, but
    $\mat{M}$-self-adjoint, as the following calculation for arbitrary polynomials
    $u, v, w$ of degree $\leq p$ shows:
    \begin{equation}
      \left\langle \vec{v}, \mat{u} \vec{w} \right\rangle_M
      =
      \vec{v}^T \mat{M} \mat{u} \vec{w}
      =
      \int v \proj(u \, w)
      =
      \int v \, u \, w
      =
      \int \proj(u \, v) \, w
      =
      \vec{v}^T \mat{u}[^T] \mat{M} \vec{w}
      =
      \left\langle \mat{u} \vec{v}, \vec{w} \right\rangle_M.
    \end{equation}
    In the second step, the definition of the multiplication operator as exact
    multiplication followed by an orthogonal projection on the space of polynomials
    of degree $\leq p$ is inserted. In the following step, this orthogonality is used,
    since $v$ is a polynomial of degree $\leq p$. Similarly, the orthogonality of Legendre
    polynomials is used in the fourth step.
    Thus, multiplication operators $\mat{u}$ are $\mat{M}$-self-adjoint.
  \end{enumerate}
\end{remark}

\begin{remark}
  The correction terms used in Theorem \ref{thm:CPR-burgers-general} can be extended
  to systems as well. In \cite{gassner2016well}, an entropy stable split form of
  the shallow water equations has been analysed using Lobatto-Legendre nodes.
  This has been extended to two-dimensional curvilinear grids in
  \cite{wintermeyer2015entropy} and to a whole two-parameter family of splittings
  for general SBP bases in \cite{ranocha2017shallow}. Moreover, a kinetic energy
  preserving DG method for the Euler equations using a split form has been
  proposed in \cite{ortleb2016kineticTalk, ortleb2016kinetic}.
\end{remark}

\subsection{Numerical results for dense norm and modal bases}
\label{sec:numerical-tests-Burgers}

Here, Burgers' equation \eqref{eq:inviscid-burgers} is considered in the domain
$[0, 2]$ with periodic boundary conditions . The initial condition 
\begin{equation}
  u(0, x) = u_0(x) = \sin(\pi x) + 0.01
\end{equation}
is evolved in time using the classical fourth order Runge-Kutta method with
$\num{10000}$ time steps in the time interval $[0, 3]$. As semidiscretisation in
space, several SBP CPR methods with $N = 20$ equally spaced elements describing
polynomials of degree $\leq p = 7$ and correction terms
\eqref{eq:inviscid-burgers-CPR-correction-terms-general} are used.

As nodal bases with diagonal norm matrix $\mat{M}$, the nodes of Gauß-Legendre
and Lobatto-Legendre quadrature rules are used.
The new nodal bases represent polynomials of degree $\leq p = 7$ using their
values at the 
\begin{itemize}
  \item roots $\xi_i = \cos \frac{(2i+1) \pi}{2p+2}, \, i = 0, \dots, p$, 
  \item extrema $\xi_i = \cos \frac{i \pi}{p}, \, i = 0, \dots, p$
\end{itemize}
of Chebyshev polynomial $T_{p+1}$ of first kind or the
\begin{itemize}
  \item roots $\xi_i = \cos \frac{(i+1) \pi}{p+2}, \, i = 0, \dots, p$
\end{itemize}
of the Chebyshev polynomial $U_{p+1}$ of second kind. The differentiation and
norm matrices $\mat{D}$, $\mat{M}$ are computed via their representation for
Legendre polynomials and a basis transformation using the associated Vandermonde
matrix, see \ref{sec:appendix-bases}. Multiplication is conducted pointwise
at the corresponding Chebyshev nodes. For these bases, $\mat{M}$ is not diagonal
and multiplication operators $\mat{u}$ are not $\mat{M}$-self-adjoint in general.

Additionally, a modal basis of Legendre polynomials as described in Remark
\ref{rem:bases} is used. An interpolation approach to compute the initial values
for a Legendre basis using the nodes of all
nodal bases presented in Figure \ref{fig:burgers-solution} has been used. There
is no visual difference between results for these different sets of nodes.
In the following, interpolation via Gauß-Legendre nodes has been used.

The momentum and energy of the numerical solutions using the local
Lax-Friedrichs flux are shown in Figure~\ref{fig:burgers-momentum-energy}.
For comparison, the results of \cite{ranocha2016summation} using Gauß-Legendre
and Lobatto-Legendre bases are included in the first rows.
The corresponding numerical solutions are given in \ref{sec:num-sol-Burgers},
Figure~\ref{fig:burgers-solution}, for comparison.

As expected, momentum is conserved for all bases and the discrete energy
(entropy) is constant until $t \approx 0.5$ and decays afterwards, as can be
seen in Figure \ref{fig:burgers-momentum-energy}.

These results are obtained using general SBP CPR methods 
\eqref{eq:inviscid-burgers-CPR-full-correction-general}
with both correction terms for divergence and restriction
\eqref{eq:inviscid-burgers-CPR-correction-terms-general}. Ignoring a non-trivial
correction term for a nodal basis leads to physically useless results, as shown
for example by \cite[Figure 11]{ranocha2016summation}.
Results without the skew-symmetric correction $\vec{c}_\mathrm{div}$ are not plotted
here. Additionally, the correction term $\vec{c}_\mathrm{div}$ using the
$\mat{M}$-adjoint multiplication operator is verified numerically, since using
the simple multiplication as in the previous chapter gives erroneous results,
again not shown here.

Remarkably, the results (not plotted here) using a modal Legendre basis and
either both or no correction term ($\vec{c}_\mathrm{div}$, $\vec{c}_\mathrm{res}$) are
visually indistinguishable. Additionally, using only $\vec{c}_\mathrm{res}$ yields the
same results. Contrary, using only a correction for the divergence results in
varying momentum and physically useless results. Using an exact orthogonal
projection during multiplication seems to be a good idea, but an analytical
investigation of this phenomenon remains an open problem.

\begin{figure}[!hp]
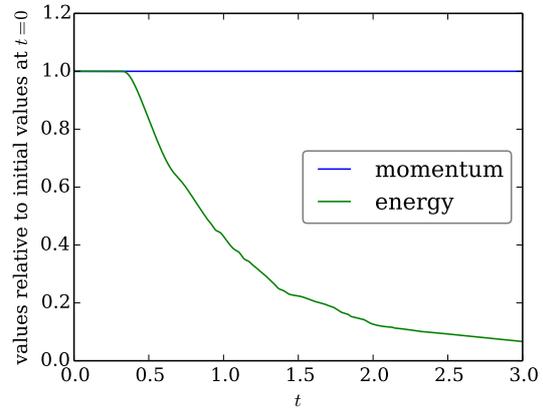
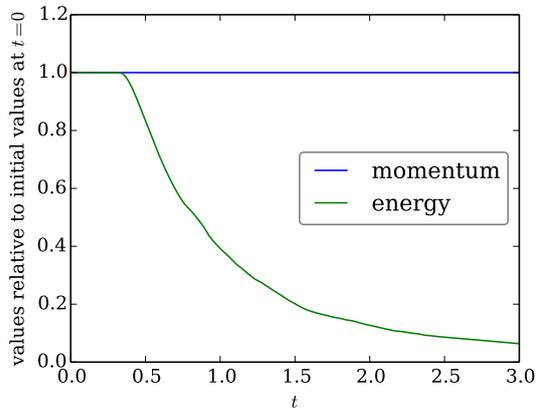
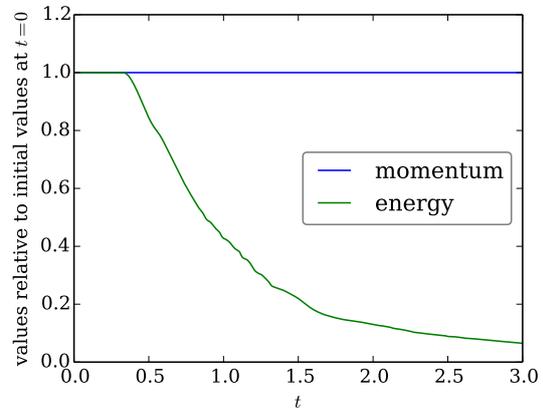
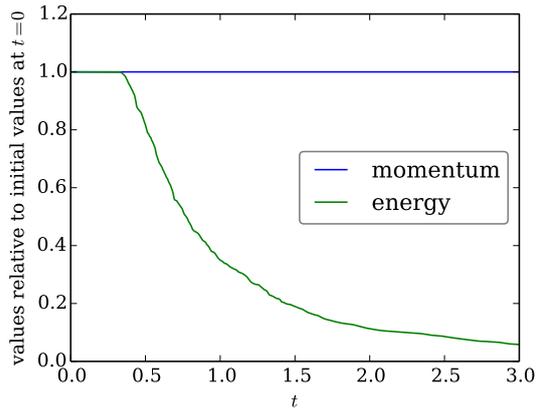
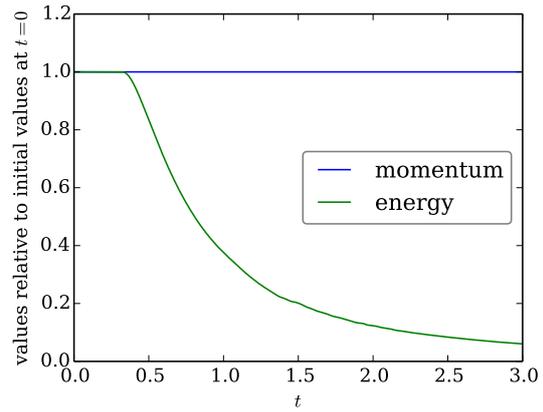

  \centering
  \begin{subfigure}[b]{0.45\textwidth}
    \includegraphics[width=\textwidth]%
      {figures/GaussLegendre1D_7_20_local_lax_friedrichs_flux_2}
    \caption{Gauß-Legendre.}
  \end{subfigure}%
  ~
  \begin{subfigure}[b]{0.45\textwidth}
    \includegraphics[width=\textwidth]%
      {figures/LobattoLegendre1D_7_20_local_lax_friedrichs_flux_2}
    \caption{Lobatto-Legendre.}
  \end{subfigure}%
  \\
  \begin{subfigure}[b]{0.45\textwidth}
    \includegraphics[width=\textwidth]%
      {figures/Chebyshev1roots1D_7_20_local_lax_friedrichs_flux_2}
    \caption{Chebyshev first kind, roots.}
  \end{subfigure}%
  ~
  \begin{subfigure}[b]{0.45\textwidth}
    \includegraphics[width=\textwidth]%
      {figures/Chebyshev1extrema1D_7_20_local_lax_friedrichs_flux_2}
    \caption{Chebyshev first kind, extrema.}
  \end{subfigure}%
  \\
  \begin{subfigure}[b]{0.45\textwidth}
    \includegraphics[width=\textwidth]%
      {figures/Chebyshev2roots1D_7_20_local_lax_friedrichs_flux_2}
    \caption{Chebyshev second kind, roots.}
  \end{subfigure}%
  ~
  \begin{subfigure}[b]{0.45\textwidth}
    \includegraphics[width=\textwidth]%
      {figures/Legendre1D_7_20_local_lax_friedrichs_flux_2}
    \caption{Legendre.}
  \end{subfigure}%
  \caption{Results of the simulations for Burgers' equation
           using general SBP CPR methods with 20 elements, different bases of
           order 7 and local Lax-Friedrichs (LLF) flux.
           Corrections for both divergence and restriction are used.
           Each Figure shows the discrete momentum $\vec{1}^T \mat{M} \vec{u}$
           (blue) and discrete energy $\vec{u}^T \mat{M} \vec{u}$ (green) for
           different bases.
           (For interpretation of the references to colour in this figure legend,
           the reader is referred to the web version of this article.)}
  \label{fig:burgers-momentum-energy}
\end{figure}

\begin{remark}
  These high-order SBP CPR methods should be seen as some entropy stable baseline
  schemes. If calculations involving shocks are performed, these schemes remain
  stable and do not crash, but oscillations occur. Therefore, some form of
  additional shock capturing should be performed, e.g. artificial dissipation or
  modal filtering \cite{ranocha2016enhancing, glaubitz2016enhancing}. However,
  this is not the target of this investigation.
\end{remark}

\subsection{A brief view on a numerical setting}

The analytical setting of section \ref{sec:analytical-setting-in-one-dimension}
is based on a given solution space $X_V$ for the one-dimensional standard
element, since the investigations in this work started from CPR methods,
extending DG methods, which are also described by a fundamental basis.
Contrary, the theory of SBP operators originates in FD methods, classically not
equipped with a solution basis other than the nodal values. Nevertheless,
Gassner \cite{gassner2013skew} adapted the SBP framework to a DGSEM with nodal
Lobatto-Legendre basis and lumped mass matrix. Additionally,
Fernández et al. \cite{fernandez2014generalized} proposed a generalised SBP framework in one
dimension based on nodal values without an analytical basis. Instead, the
operators are required to fulfil the SBP property and some accuracy conditions,
i.e. they should be exact for polynomials up to some degree $p \geq 1$.
These ideas were extended by Hicken et al. \cite{hicken2016multidimensional} to
multi-dimensional operators, focussing on diagonal-norm SBP operators on
simplex elements in two and three dimensions, i.e. triangles and tetrahedra.

These extensions were applied to linear advection with constant velocity
and proved to be conservative and stable in the norm associated with the SBP
operator. Relaxing accuracy conditions potentially results in additional free
parameters, allowing the construction of specialised schemes for different
purposes. As already proved in \cite{hicken2013summation}, SBP operators
are tightly coupled to quadrature rules. Thus, different quadrature rules can
be used to obtain SBP operators and vice versa.

All investigations conducted in the previous chapters and sections directly
extend to these generalised FD SBP operators with diagonal or dense norm,
respectively. Additionally, since these operators are described by the same
matrices used hitherto in the investigations, they can be simply plugged in
the numerical method for the calculations -- up to the last step.

In the analytical setting, the solution is completely determined by the given
coefficients with regard to the chosen basis, i.e. sub-cell resolution of
arbitrary accuracy is given. Especially, the solution can be plotted exactly
as it is used in the computations. Contrary, using only nodal values at a
given set of points without an interpretation as coefficients of a known basis,
only these point values can be plotted as output seriously. Performing any
interpolation would be a guess, but can in general not describe the solution
accurately. From the authors' point of view, this is a drawback of
the numerical setting without a basis as foundation, since high-order
methods can generate highly accurate approximations of smooth solutions with
few degrees of freedom, see also \cite[p. 78, paragraph 2]{kopriva2009implementing}.
However, if the resolution is good enough, a piecewise linear approximation of
nodal values used in most plotting software can be sufficient. Thus, depending on
the applications, this can also be considered a minor drawback.

The inability to describe a modal basis does not seem to be equally unfavourable,
since computing a correct orthogonal projection for division is not a straightforward
task and nodal methods are much more efficient regarding evaluation times for
nonlinear operations.

A solution of the interpolation problem would be to construct a basis describing
a given SBP operator. For example, Gassner \cite{gassner2013skew} constructed a basis
for a specially chosen FD SBP operator. However, there does not seem to be a
straightforward way to construct such a basis in general.

\section{Curvilinear coordinates}
\label{sec:curvilinear}

For real world applications, simple linear coordinate transformations from the
physical element to the reference element might not be enough. Therefore,
curvilinear coordinate transformations have to be applied. However,
Svärd \cite{svard2004coordinate} investigated such transformations in the setting
of finite difference SBP operators. As a result, only diagonal norm SBP operators
have been used for curvilinear coordinates. 
Nevertheless, this sections investigates curvilinear coordinates for the linear
advection equation with constant coefficients in the setting of SBP CPR methods,
also with dense norms.

\subsection{Linear advection}

The linear advection equation 
\begin{equation}
\label{eq:lin-adv}
  \partial_t u + \partial_x u = 0
\end{equation}
with appropriate initial and boundary conditions is used as test problem.
Integrating over an interval $\Omega$ yields the integral form
\begin{equation}
  \od{}{t} \int_\Omega u
  =
  \int_\Omega \partial_t u
  =
  - \int_\Omega \partial_x u
  =
  - u \big|_{\partial \Omega}.
\end{equation}
Similarly, the energy obeys for sufficiently smooth solutions
\begin{equation}
  \frac{1}{2} \od{}{t} \norm{u}_\Omega^2
  =
  \int_\Omega u \partial_t u
  =
  - \int_\Omega u \partial_x u
  =
  - \frac{1}{2} u^2 \big|_{\partial \Omega}.
\end{equation}
If a coordinate mapping $x \mapsto \xi$ from the physical element $\Omega$
to a reference element $\hat\Omega$ is used, the resulting transport equation is
\begin{equation}
  \partial_t u + \partial_x \xi \, \partial_\xi u = 0.
\end{equation}
Using $J = \partial_\xi x = (\partial_x \xi)^{-1}$, this can be written as
\begin{equation}
  J \partial_t u + \partial_\xi u = 0.
\end{equation}
Therefore, integration over the reference element $\hat\Omega$ yields

\begin{equation}
  \od{}{t} \int_\Omega u \dif x
  =
  \od{}{t} \int_{\hat\Omega} J u \dif \xi
  =
  \int_{\hat\Omega} J \partial_t u \dif \xi
  =
  - \int_{\hat\Omega} \partial_\xi u \dif \xi
  =
  - u \big|_{\partial \Omega}.
\end{equation}
Similarly,
\begin{equation}
  \frac{1}{2} \od{}{t} \norm{u}_\Omega^2
  =
  \frac{1}{2} \od{}{t} \int_{\hat\Omega} J u^2 \dif \xi
  =
  \int_{\hat\Omega} J u \partial_t u \dif \xi
  =
  - \int_{\hat\Omega} u \partial_\xi u \dif \xi
  =
  - \frac{1}{2} u^2 \big|_{\partial \Omega}.
\end{equation}
Semidiscretely, the transport equation is approximated on a reference element $\hat\Omega$ by
\begin{equation}
  \mat{J} \partial_t \vec{u}
  + \mat{D} \vec{u}
  + \mat{M}[^{-1}] \mat{R}[^T] \mat{B} \left( \vecfnum - \mat{R} \vec{u} \right)
  = \vec{0}.
\end{equation}
Investigating conservation, the SBP property \eqref{eq:SBP} and exactness of
differentiation for constants, i.e. $\mat{D} \vec{1} = 0$, can be used to get
\begin{equation}
\label{eq:curvilinear-conservation}
\begin{aligned}
  \vec{1}^T \mat{M} \mat{J} \partial_t \vec{u}
  =&
  - \vec{1}^T \mat{M} \mat{D} \vec{u}
  - \vec{1}^T \mat{R}[^T] \mat{B} \left( \vecfnum - \mat{R} \vec{u} \right)
  \\
  =&
  \vec{1}^T \mat{D}[^T] \mat{M} \vec{u}
  - \vec{1}^T \mat{R}[^T] \mat{B} \mat{R} \vec{u}
  - \vec{1}^T \mat{R}[^T] \mat{B} \left( \vecfnum - \mat{R} \vec{u} \right)
  =
  - \vec{1}^T \mat{R}[^T] \mat{B} \vecfnum.
\end{aligned}
\end{equation}
Similarly,
\begin{equation}
\label{eq:curvilinear-energy}
\begin{aligned}
  &
  \vec{u}^T \mat{M} \mat{J} \partial_t \vec{u}
  =
  - \vec{u}^T \mat{M} \mat{D} \vec{u}
  - \vec{u}^T \mat{R}[^T] \mat{B} \left( \vecfnum - \mat{R} \vec{u} \right)
  \\
  =&
  - \frac{1}{2} \vec{u}^T \mat{M} \mat{D} \vec{u}
  + \frac{1}{2} \vec{u}^T \mat{D}[^T] \mat{M} \vec{u}
  - \frac{1}{2} \vec{u}^T \mat{R}[^T] \mat{B} \mat{R} \vec{u}
  - \vec{u}^T \mat{R}[^T] \mat{B} \left( \vecfnum - \mat{R} \vec{u} \right)
  =
  - \vec{u}^T \mat{R}[^T] \mat{B}
    \left( \vecfnum - \frac{1}{2} \mat{R} \vec{u} \right).
\end{aligned}
\end{equation}
To use \eqref{eq:curvilinear-energy} as an energy estimate, $\mat{M} \mat{J}$ has
to be symmetric and positive definite.
There are three cases that can be handled differently:
\begin{enumerate}
  \item
  If the coordinate transformation $x \mapsto \xi$ satisfies $J = \partial_\xi x > 0$,
  a nodal SBP basis with diagonal norm $\mat{M}$ and pointwise multiplication
  operator $\mat{J}$ is used, both $\mat{M}$ and $\mat{J}$ are diagonal matrices
  with positive entries on the diagonal. Therefore, $\mat{M} \mat{J}$ is symmetric
  and positive definite. 
  This is fulfilled inter alia by Gauß-Legendre and Lobatto-Legendre bases.

  \item
  If a modal Legendre basis is used and $\mat{J}$ is given by exact multiplication
  followed by exact $L^2$ projection, 
  \begin{equation}
    \vec{v}^T \mat{M} \mat{J} \vec{u}
    =
    \int v \proj(J u)
    =
    \int v \, J u
    =
    \int \proj(J v) u
    =
    \vec{v}^T \mat{J}[^T] \mat{M} \vec{u},
  \end{equation}
  since the Legendre polynomials are orthogonal. Thus, $\mat{J}$ is
  $\mat{M}$-self-adjoint and $\mat{M} \mat{J} = \mat{J}[^T] \mat{M}$ is symmetric
  and positive definite for $J = \partial_\xi x > 0$.

  \item
  As described in \ref{sec:appendix-bases}, coordinate transformations
  can be used to transform multiplication operators $\mat{J}$ from one basis to
  another. If $\vec{u}, \vec{\hat u}$ are vectors represented in different bases
  and $\mat{V}$ is the corresponding transformation matrix, i.e.
  $\vec{u} = \mat{V} \vec{\hat u}$, the matrices $\mat{J}$ and $\mat{\hat J}$ are
  related via $\mat{\hat J} = \mat{V}[^{-1}] \mat{J} \mat{V}$.
  This can be used to transform the multiplication operator $\mat{J}$ and
  the mass matrix $\mat{M}$ from a bases in which $\mat{M} \mat{J}$ is symmetric
  and positive definite [e.g. Gauß-Legendre nodes with diagonal multiplication
  matrix] to another one [e.g. dense norm Chebyshev bases as in section 
  \ref{sec:numerical-tests-Burgers}].
\end{enumerate}

\subsection{Numerical results}

The initial condition $u_0(x) = \exp\left( -20 x^2 \right)$ of the linear advection 
equation \eqref{eq:lin-adv} has been transported in the time interval $[0,4]$
using the classical fourth order Runge-Kutta method.
The domain $[-1,1]$ is divided into $5$ elements using polynomials of degree
$\leq p = 9$ on each element. Here, three different grids have been considered.
\begin{itemize}
  \item 
  Grid 1: Uniform widths
  $\Delta x_i = \Delta x_{i-1}$, $i \in \set{2,3,4,5}$.
  
  \item
  Grid 2: Alternating widths
  $\Delta x_i = 10 \Delta x_{i-1}$ for $i \in \set{3,5}$ and
  $\Delta x_i = \frac{1}{10} \Delta x_{i-1}$ for $i \in \set{2,4}$.
  
  \item
  Grid 3: Geometrically increasing widths
  $\Delta x_i = \frac{3}{2} \Delta x_{i-1}$, $i \in \set{2,3,4,5}$.
\end{itemize}

As polynomial bases, nodal bases using Gauß-Legendre and Lobatto-Legendre nodes 
as well as the roots of the Chebyshev polynomials $U_{p+1}$ of second kind
have been chosen.
As numerical flux, the central flux $\fnum(u_-, u_+) = \frac{u_- + u_+}{2}$
is used.

Two kinds of (local) mappings from the reference element $[-1,1]$ to each of the
five physical elements $[x_\mathrm{min}, x_\mathrm{max}]$
have been chosen:
\begin{itemize}
  \item 
  The standard linear mapping
  $[-1,1] \ni \xi \mapsto \frac{x_\mathrm{max}+x_\mathrm{min}}{2} + \xi \frac{x_\mathrm{max}-x_\mathrm{min}}{2}$.
  
  \item
  The quadratic mapping
  $[-1,1] \ni \xi \mapsto \frac{x_\mathrm{max}-x_\mathrm{min}}{8} (\xi+2)^2 + x_\mathrm{min} - \frac{x_\mathrm{max}-x_\mathrm{min}}{8}$.
\end{itemize}
Computing $\partial_\xi x$ via differentiation of the interpolation polynomial,
two versions for the computation of $\mat{J}$ have been conducted:
\begin{itemize}
  \item 
  As diagonal multiplication operator $\diag{ \partial_\xi x \big|_{\xi=\xi_i} }$.
  
  \item
  As $\mat{V} \mat{\tilde J} \mat{V}[^{-1}]$, where $\mat{\tilde J}$ is the
  diagonal Jacobian in a basis using Gauß-Legendre nodes.
\end{itemize}

The numerical results for the linear mapping and all choices considered here
are visually indistinguishable from the initial data which are also the solution
at $t = 4$. Therefore, these results are not shown here.

However, if the quadratic mapping is used for Chebyshev nodes and $\mat{J}$ is
computed as diagonal multiplication matrix, the numerical solution blows up.
If the stable choice of computing $\mat{J}$ via Gauß-Legendre nodes is
used, the solution is visually the same as the one for the linear mapping, as
can be seen in Figure \ref{fig:lin-adv-quadratic} [cf. case 3. in the previous
section].

Similarly, if the diagonal multiplication matrix $\mat{J}$ is used for
Lobatto-Legendre nodes, the solutions is stable [cf. case 1. in the previous
section]. However, since the lumped mass matrix is not exact as for Gauß-Legendre
nodes, computing $\mat{J}$ via Gauß-Legendre nodes results in a blow up of the
numerical solution.

The results are qualitatively independent on the choice of the grid for the
three different possibilities described above: If the method is stable, the
results are visually indistinguishable from the exact solution. Otherwise,
the numerical solutions blows up.

\begin{figure}[!hp]
  \centering
  \begin{subfigure}[b]{0.42\textwidth}
    \includegraphics[width=\textwidth]%
      {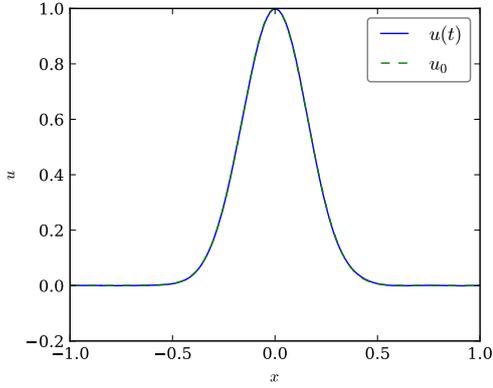}
    \caption{Chebyshev second kind (roots),
             $\mat{J}$ computed via Gauß-Legendre nodes.}
  \end{subfigure}%
  ~
  \begin{subfigure}[b]{0.42\textwidth}
    \includegraphics[width=\textwidth]%
      {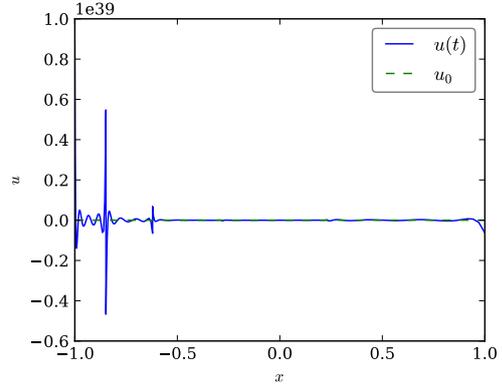}
    \caption{Chebyshev second kind (roots),
             $\mat{J}$ as diagonal multiplication matrix.}
  \end{subfigure}%
  \\
  \begin{subfigure}[b]{0.42\textwidth}
    \includegraphics[width=\textwidth]%
      {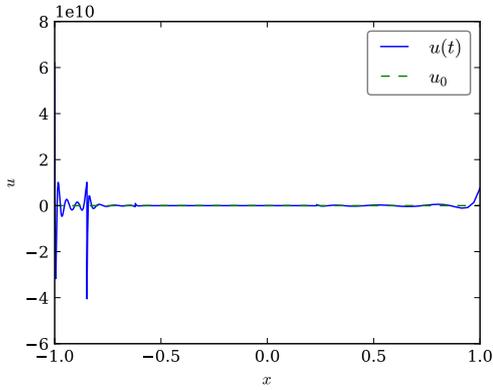}
    \caption{Lobatto-Legendre nodes,
             $\mat{J}$ computed via Gauß-Legendre nodes.}
  \end{subfigure}%
  ~
  \begin{subfigure}[b]{0.42\textwidth}
    \includegraphics[width=\textwidth]%
      {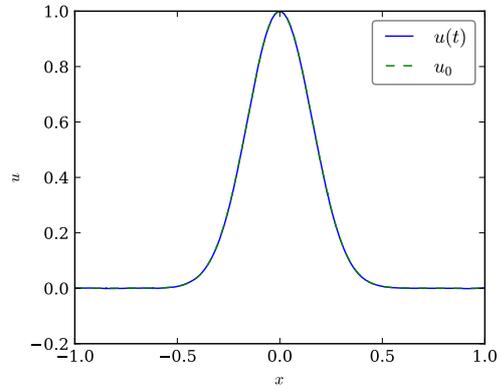}
    \caption{Lobatto-Legendre nodes,
             $\mat{J}$ as diagonal multiplication matrix.}
  \end{subfigure}%
  \\
  \begin{subfigure}[b]{0.42\textwidth}
    \includegraphics[width=\textwidth]%
      {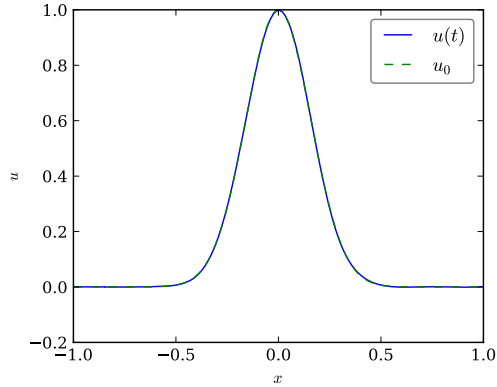}
    \caption{Gauß-Legendre nodes,
             $\mat{J}$ as diagonal multiplication matrix.}
  \end{subfigure}%
  \caption{Results of the simulations for linear advection using
           $5$ elements on grid 3 (geometrically increasing widths)
           with polynomials of degree $\leq p = 9$, the central flux, and the
           quadratic mapping.
           Each Figure shows the values of $u(t=4)$ (blue line) and $u(0) = u_0$
           (green, dashed).
           (For interpretation of the references to colour in this figure legend,
           the reader is referred to the web version of this article.)}
  \label{fig:lin-adv-quadratic}
\end{figure}

\subsection{Multiple dimensions}

In order to apply the methods of the previous sections to multi-dimensional problems,
it is common to use tensor product bases and rectangular grids. Only curvilinear
coordinates deserve a special treatment in this case, since all other desired
properties extend naturally to several space dimensions. However, genuinely
multi-dimensional SBP operators can be constructed as well. Here, the extension
of the analytical setting of section \ref{sec:generalisation} is described, similarly
to the numerical setting of \cite{hicken2016multidimensional}, see also
\cite[section 6]{ranocha2016sbp}.

As in section \ref{sec:generalisation}, there is a finite-dimensional (real)
Hilbert space $X_V$ of functions on the $d$ dimensional reference (volume)
element $\Omega$, equipped with a basis $\mathcal{B}_V$. With regard to this
basis, the scalar product is represented by the mass matrix $\mat{M}$ and
approximates the $L^2$ norm on $X_V$.
Moreover, there are $d$ derivative operators $\mat{D_i}, i \in \set{1, \dots, d}$,
denoting the partial derivative in the $i$-th coordinate direction.

Furthermore, the functions on the $d-1$ dimensional boundary $\partial\Omega$
are members of the Hilbert space $X_B$ with basis $\mathcal{B}_B$. The associated
scalar product is given by the matrix $\mat{B}$, approximating the $L^2$ scalar
product via
\begin{equation}
  \vec{u_B}^T \mat{B} \vec{v_B}
  =
  \left\langle \vec{u_B}, \vec{v_B} \right\rangle_B
  \approx 
  \left\langle u_B, v_B \right\rangle_{L^2(\partial\Omega)}
  =
  \int_{\partial\Omega} u_B v_B.
\end{equation}
As in one dimension, there is a restriction operator $\mat{R}$ mapping $X_V$ to
$X_B$. Finally, there are $d$ operators $\mat{N_i}, i \in \set{1,\dots,d},$ on
$X_B$ performing multiplication with the $i$-th component of the outer unit normal $\nu$
at $\partial\Omega$. Together, these operators approximate 
\begin{equation}
  \vec{u}^T \mat{R}[^T] \mat{B} \mat{N_i} \mat{R} \vec{v}
  \approx
  \int_{\partial\Omega} u v \, \nu_i.
\end{equation}
In the end, the SBP property in multiple dimensions can be formulated as
\begin{equation}
\label{eq:SBP-multi-dim}
  \mat{M} \mat{D_i} + \mat{D_i}[^T] \mat{M}
  =
  \mat{R}[^T] \mat{B} \mat{N_i} \mat{R},
\end{equation}
mimicking the divergence theorem
\begin{equation}
  \int_\Omega u (\partial_i v) + \int_\Omega (\partial_i u) v
  \approx
  \vec{u}^T \mat{M} \mat{D_i} \vec{v} + \vec{u}^T \mat{D_i}[^T] \mat{M} \vec{v}
  =
  \vec{u}^T \mat{R}[^T] \mat{B} \mat{N_i} \mat{R} \vec{v}
  \approx
  \int_{\partial\Omega} u v \, \nu_i.
\end{equation}
Of course, tensor product bases formed by one-dimensional SBP bases fulfil the 
requirements for multi-dimensional SBP bases.

\begin{remark}
  For curvilinear grids in multiple dimensions, the metric identities are crucial
  for free stream preservation, conservation and stability, as described inter
  alia in \cite{kopriva2006metric}. An application using split-form discontinuous
  Galerkin methods for the shallow water equations in two-dimensions on curvilinear
  grids is presented in \cite{wintermeyer2015entropy}.
\end{remark}

\section{Summary and discussion}
\label{sec:summary}

In this work, an extended analytical framework for SBP methods has been
proposed. Using the results of \cite{ranocha2016summation}, the linearly stable
CPR methods of \cite{vincent2011newclass, vincent2015extended} and the DGSEM of
\cite{gassner2013skew} are embedded in this framework.

Additionally, new forms of correction terms for nonlinear conservation laws are
developed, using the inviscid Burgers' equation as an example. These correction
terms for both divergence and restriction to the boundary extend the skew-symmetric
form of conservation laws used in traditional FD SBP methods and the DGSEM based
on Lobatto nodes \cite{fisher2013discretely, gassner2013skew}.

For the first time, these new corrections allow for both modal and nodal SBP bases
without any further conditions on the norm (e.g. diagonal) or the presence of
nodes at the boundary. Using the SBP property, both conservation and stability
in a discrete norm adapted to the chosen bases are proved. These results
extend directly to traditional SBP methods lacking the foundation of an
analytical basis, since only structural properties of the representations in a
given basis are used.

Moreover, stability for curvilinear grids and dense norms is obtained by using a
suitable way to compute the Jacobian. Thus, complications that have been known
in the FD framework of SBP methods can be circumvented for SBP CPR methods.

A straightforward extension of the analytical setting to multiple dimensions is described
in \cite{ranocha2016sbp}. Similar to the numerical setting of
\cite{fernandez2014generalized, hicken2016multidimensional}, this genuinely
multi-dimensional formulation allows inter alia simplex elements and does not
rely on a tensor product extension. Of course, the standard multi-dimensional
setting using tensor products is embedded therein.

Further research includes fully discrete schemes and other examples for nonlinear 
systems of conservation laws.

\appendix

\section{Some bases}
\label{sec:appendix-bases}

The analytical setting described in section \ref{sec:analytical-setting-in-one-dimension}
uses finite dimensional Hilbert spaces to represent numerical solutions.
In all cases considered in this article, the space of polynomials of degree
$\leq p$ has been used. However, for concrete computations, a basis has to be
selected. If the derivative $\mat{D}$, restriction $\mat{R}$, and mass matrix
$\mat{M}$ are exact for polynomials of degree $\leq p$, the SBP property \eqref{eq:SBP}
is fulfilled, since integration by parts can be applied, see also
\cite{kopriva2010quadrature,hicken2013summation}:
\begin{equation*}
  \vec{u}^T \mat{M} \mat{D} \vec{v}
  + \vec{u}^T \mat{D}[^T] \mat{M} \vec{v}
  =
  \int_{-1}^1 u \, (\partial_x v) \dif x
  + \int_{-1}^1 (\partial_x u) \, v \dif x
  =
  \eval[2]{u \, v}_{-1}^{1}
  =
  \vec{u}^T \mat{R}[^T] \mat{B} \mat{R} \vec{v}.
\end{equation*}
Furthermore, the matrix representations of linear operators ($\mat{R}, \mat{D}$)
and bilinear forms ($\mat{M}$) can be computed in one basis and then transformed
via the standard transformation rules to another basis (at least in theory and 
for small polynomial degrees, since the condition numbers might increase drastically
for higher polynomial degrees).

To compute the matrices $\mat{M}, \mat{D}$ for the nodal bases using Chebyshev
points, the associated matrices in a modal Legendre basis are used.
The coordinate transformation from a nodal basis with nodes $\xi_0, \dots, \xi_p$
to a modal basis of Legendre polynomials $\phi_0, \dots, \phi_p$ of degree
$\leq p$ is given by the Vandermonde matrix $\mat{V}$ with
$V_{i,j} = \phi_j(\xi_i)$. Writing vectors and matrices with regard to the
modal basis with $\hat{\cdot}$, the transformation is
$\mat{V} \vec{\hat{u}} = \vec{u}$. Thus, operators like the derivative are
transformed as $\mat{\hat{D}} = \mat{V}[^{-1}] \mat{D} \mat{V}$ and matrices
associated with a scalar product like $\mat{M}$ as
$\mat{\hat{M}} = \mat{V}[^T] \mat{M} \mat{V}$.

The modal matrices are
\begin{equation}
  \mat{\hat{M}} =
  \begin{pmatrix}
     2      
  \\ &  \frac{2}{3}
  \\ & & \ddots
  \\ & & & \frac{2}{2p+1}
  \end{pmatrix}
  ,\quad
  \mat{\hat{D}} =
  \begin{pmatrix}
     0      &  1     &  0     &     1  &  0     &  \dots
  \\ 0      &  0     &  3     &     0  &  3     &  \dots
  \\ 0      &  0     &  0     &     5  &  0     &  \dots
  \\ 0      &  0     &  0     &     0  &  7     &  \dots
  \\ \vdots & \vdots & \vdots & \vdots & \vdots & \ddots
  \end{pmatrix}.
\end{equation}

Using $p = 2$ as an example, the nodal bases with dense norm are given by the
following matrices.
\begin{itemize}
  \item
  The roots of the Chebyshev polynomials of first kind are
  $\xi_i = \cos\left( \frac{2i+1}{2p+2}\pi \right)$, for $i \in \set{p,p-1,\dots,0}$.
  The Vandermonde matrix is
  \begin{equation}
  \renewcommand{\arraystretch}{1.5}
    \mat{V} =
    \begin{pmatrix}
      1  &  -\frac{\sqrt{3}}{2}  & \frac{5}{8} \\
      1  &  0                    & -\frac{1}{2} \\
      1  &  \frac{\sqrt{3}}{2}   & \frac{5}{8} \\
    \end{pmatrix}.
  \end{equation}
  Calculating the mass matrix as $\mat{M} = \mat{V}^{-T} \mat{\hat{M}} \mat{V}$
  results in
  \begin{equation}
  \renewcommand{\arraystretch}{1.5}
    \mat{M} =
    \begin{pmatrix}
      \frac{2}{5}   &  \frac{4}{45}   &  -\frac{2}{45} \\
      \frac{4}{45}  &  \frac{14}{15}  &  \frac{4}{45} \\
      -\frac{2}{45} &  \frac{4}{45}   &  \frac{2}{5} \\
    \end{pmatrix}.
  \end{equation}
  The restriction (interpolation to the boundary) and boundary matrices used are
  \begin{equation}
  \renewcommand{\arraystretch}{1.5}
    \mat{R} = 
    \begin{pmatrix}
      \frac{2 + \sqrt{3}}{3}  &  -\frac{1}{3}  &  \frac{2 - \sqrt{3}}{3} \\
      \frac{2 - \sqrt{3}}{3}  &  -\frac{1}{3}  &  \frac{2 + \sqrt{3}}{3} \\
    \end{pmatrix},
    \quad
    \mat{B} = 
    \begin{pmatrix}
      -1  &  0 \\
      0   &  1 \\
    \end{pmatrix}.
  \end{equation}
  Computing the derivative matrix via $\mat{D} = \mat{V} \mat{\hat{D}} \mat{V}^{-1}$
  yields
  \begin{equation}
  \renewcommand{\arraystretch}{1.5}
    \mat{D} = 
    \begin{pmatrix}
      -\sqrt{3}            &  \frac{4 \sqrt{3}}{3}   &  -\frac{\sqrt{3}}{3} \\
      -\frac{\sqrt{3}}{3}  &  0                      &  \frac{\sqrt{3}}{3} \\
      \frac{\sqrt{3}}{3}   &  -\frac{4 \sqrt{3}}{3}  &  \sqrt{3} \\
    \end{pmatrix}.
  \end{equation}

  \item
  The extrema of the Chebyshev polynomials of first kind are 
  $\xi_i = \cos\left( \frac{i}{p}\pi \right)$, for $i \in \set{p,p-1,\dots,0}$.
  Thus, the matrices are
  \begin{gather}
  \renewcommand{\arraystretch}{1.5}
    \mat{V} = 
    \begin{pmatrix}
      1 & -1 & 1 \\
      1 & 0 & -\frac{1}{2} \\
      1 & 1 & 1
    \end{pmatrix},
    \quad
    \mat{M} = 
    \begin{pmatrix}
      \frac{4}{15} & \frac{2}{15} & -\frac{1}{15} \\
      \frac{2}{15} & \frac{16}{15} & \frac{2}{15} \\
      -\frac{1}{15} & \frac{2}{15} & \frac{4}{15}
    \end{pmatrix},
    \quad
    \mat{R} = 
    \begin{pmatrix}
      1 & 0 & 0 \\
      0 & 0 & 1
    \end{pmatrix},
    \quad
    \mat{D} = 
    \begin{pmatrix}
      -\frac{3}{2} & 2 & -\frac{1}{2} \\
      -\frac{1}{2} & 0 & \frac{1}{2} \\
      \frac{1}{2} & -2 & \frac{3}{3}
    \end{pmatrix}.
  \end{gather}

  \item
  Finally, the roots of the Chebyshev polynomials of second kind are
  $\xi_i = \cos\left( \frac{i+1}{p+2}\pi \right)$, where again
  $i \in \set{p,p-1,\dots,0}$. Therefore, the matrices are
  \begin{gather}
  \renewcommand{\arraystretch}{1.5}
    \mat{V} = 
    \begin{pmatrix}
      1 & -\frac{\sqrt{2}}{2} & \frac{1}{4} \\
      1 & 0 & -\frac{1}{2} \\
      1 & \frac{\sqrt{2}}{2} & \frac{1}{4}
    \end{pmatrix},
    \quad
    \mat{M} = 
    \begin{pmatrix}
      \frac{11}{15} & -\frac{2}{15} & \frac{1}{15} \\
      -\frac{2}{15} & \frac{14}{15} & -\frac{2}{15} \\
      \frac{1}{15} & -\frac{2}{15} & \frac{11}{15}
    \end{pmatrix},
    \\
  \renewcommand{\arraystretch}{1.5}
    \mat{R} = 
    \begin{pmatrix}
      \frac{2 + \sqrt{2}}{2} & -1 & \frac{2 - \sqrt{2}}{2} \\
      \frac{2 - \sqrt{2}}{2} & -1 & \frac{2 + \sqrt{2}}{2}
    \end{pmatrix},
    \quad
    \mat{D} = 
    \begin{pmatrix}
      -\frac{3 \sqrt{2}}{2} & 2 \sqrt{2} & -\frac{\sqrt{2}}{2} \\
      -\frac{\sqrt{2}}{2} & 0 & \frac{\sqrt{2}}{2} \\
      \frac{\sqrt{2}}{2} & -2 \sqrt{2} & \frac{3 \sqrt{2}}{2}
    \end{pmatrix}.
  \end{gather}
\end{itemize}

Additionally, the diagonal-norm nodal bases are
\begin{itemize}
  \item 
  Gauß-Legendre basis with matrices
  \begin{equation}
  \renewcommand{\arraystretch}{1.5}
    \mat{M} = 
    \begin{pmatrix}
      \frac{5}{9} & 0 & 0 \\
      0 & \frac{8}{9} & 0 \\
      0 & 0 & \frac{5}{9} \\
    \end{pmatrix},
    \quad
    \mat{R} = 
    \begin{pmatrix}
      \frac{5 + \sqrt{15}}{6} & -\frac{2}{3} & \frac{5 - \sqrt{15}}{6} \\
      \frac{5 - \sqrt{15}}{6} & -\frac{2}{3} & \frac{5 + \sqrt{15}}{6}
    \end{pmatrix},
    \quad
    \mat{D} = 
    \begin{pmatrix}
      -\frac{\sqrt{15}}{2} & \frac{2 \sqrt{15}}{3} & -\frac{\sqrt{15}}{6} \\
      -\frac{\sqrt{15}}{6} & 0 & \frac{\sqrt{15}}{6} \\
      \frac{\sqrt{15}}{6} & -\frac{2 \sqrt{15}}{3} & \frac{\sqrt{15}}{2}
    \end{pmatrix}.
  \end{equation}

  \item 
  Lobatto-Legendre basis with matrices
  \begin{gather}
  \renewcommand{\arraystretch}{1.5}
    \mat{M} = 
    \begin{pmatrix}
      \frac{1}{3} & 0 & 0 \\
      0 & \frac{4}{3} & 0 \\
      0 & 0 & \frac{1}{3}
    \end{pmatrix},
    \quad
    \mat{R} = 
    \begin{pmatrix}
      1 & 0 & 0 \\
      0 & 0 & 1
    \end{pmatrix},
    \quad
    \mat{D} = 
    \begin{pmatrix}
      -\frac{3}{2} & 2 & -\frac{1}{2} \\
      -\frac{1}{2} & 0 & \frac{1}{2} \\
      \frac{1}{2} & -2 & \frac{3}{2} \\
    \end{pmatrix}.
  \end{gather}
\end{itemize}

\section{Numerical solutions for Burgers' equation}
\label{sec:num-sol-Burgers}

Here, the numerical solutions corresponding to the energy and momentum in
Figure~\ref{fig:burgers-momentum-energy} of section \ref{sec:numerical-tests-Burgers}
are shown.
The values of $u(3)$ are in general similar -- two approximately affine-linear
parts and a discontinuous part with oscillations around $x = 1$. Despite of this,
the intensity of oscillations depends on the bases and associated projection used
for multiplication.

In this case, the Gauß-Legendre nodes and modal Legendre polynomials seem to be
visually indistinguishable. Contrary, the computations using a nodal basis are
much more efficient, since only simple multiplication of nodal values has to be
performed.

\begin{figure}[!hp]
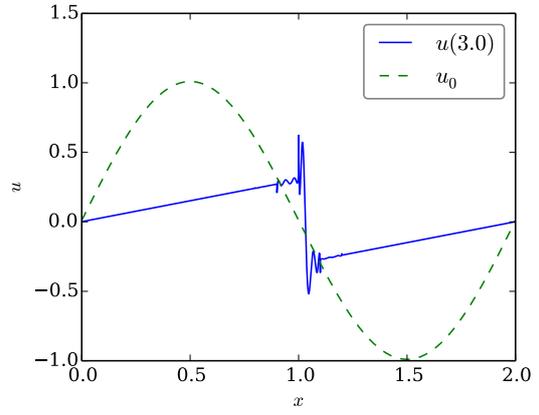
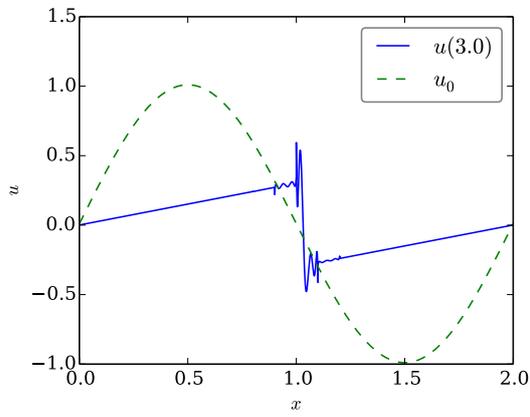
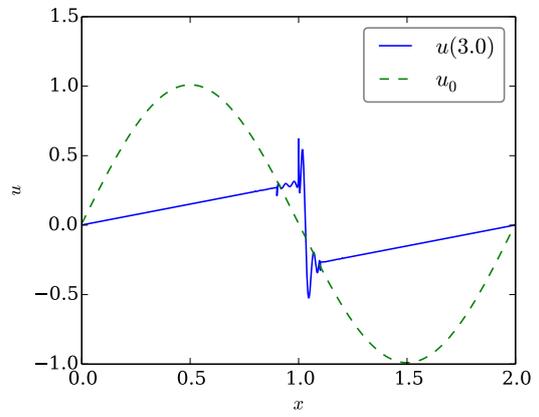
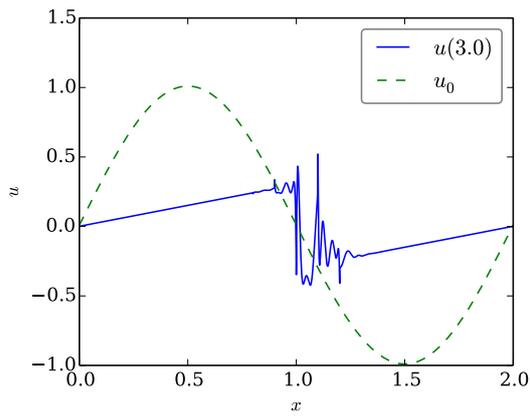
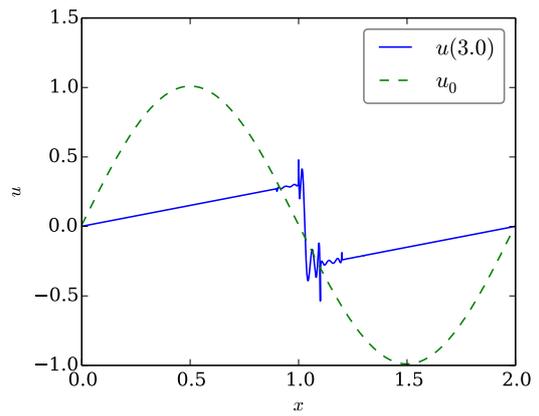

  \centering
  \begin{subfigure}[b]{0.45\textwidth}
    \includegraphics[width=\textwidth]%
      {figures/GaussLegendre1D_7_20_local_lax_friedrichs_flux_1}
    \caption{Gauß-Legendre.}
  \end{subfigure}%
  ~
  \begin{subfigure}[b]{0.45\textwidth}
    \includegraphics[width=\textwidth]%
      {figures/LobattoLegendre1D_7_20_local_lax_friedrichs_flux_1}
    \caption{Lobatto-Legendre.}
  \end{subfigure}%
  \\
  \begin{subfigure}[b]{0.45\textwidth}
    \includegraphics[width=\textwidth]%
      {figures/Chebyshev1roots1D_7_20_local_lax_friedrichs_flux_1}
    \caption{Chebyshev first kind, roots.}
  \end{subfigure}%
  ~
  \begin{subfigure}[b]{0.45\textwidth}
    \includegraphics[width=\textwidth]%
      {figures/Chebyshev1extrema1D_7_20_local_lax_friedrichs_flux_1}
    \caption{Chebyshev first kind, extrema.}
  \end{subfigure}%
  \\
  \begin{subfigure}[b]{0.45\textwidth}
    \includegraphics[width=\textwidth]%
      {figures/Chebyshev2roots1D_7_20_local_lax_friedrichs_flux_1}
    \caption{Chebyshev second kind, roots.}
  \end{subfigure}%
  ~
  \begin{subfigure}[b]{0.45\textwidth}
    \includegraphics[width=\textwidth]%
      {figures/Legendre1D_7_20_local_lax_friedrichs_flux_1}
    \caption{Legendre.}
  \end{subfigure}%
  \caption{Results of the simulations for Burgers' equation
           using general SBP CPR methods with 20 elements, different bases of
           order 7 and local Lax-Friedrichs (LLF) flux.
           Corrections for both divergence and restriction are used.
           Each Figure shows the values of $u(3)$ (blue line) and $u(0) = u_0$
           (green, dashed) for different bases.
           (For interpretation of the references to colour in this figure legend,
           the reader is referred to the web version of this article.)}
  \label{fig:burgers-solution}
\end{figure}

\section*{Acknowledgements}
The authors would like to thank the anonymous reviewers for their helpful comments,
resulting in an improved presentation of this material.

\bibliographystyle{model1b-num-names}
\bibliography{references}

\end{document}